\documentclass[a4paper,pdftex,reqno,10pt]{amsart}

\allowdisplaybreaks[1]

\usepackage[german,english]{babel}
\usepackage{enumerate}
\usepackage{amsmath,amssymb}
\usepackage{geometry}
\usepackage{float}
\usepackage{rotating}




\newcommand{\gauss}[3]{\genfrac{[}{]}{0pt}{}{#1}{#2}_{#3}}

\newcommand{\G}[3]{\mathcal{G}_{#3}(#1,#2)}

\newcommand{\I}[2]{\ensuremath{\mathcal{I}\left(#1,#2\right)}}
\DeclareMathOperator{\PG}{PG}
\DeclareMathOperator{\Aut}{Aut} 
\DeclareMathOperator{\GL}{GL}
\DeclareMathOperator{\PGL}{PGL}
\DeclareMathOperator{\PGaL}{P\Gamma L} 
\DeclareMathOperator{\st}{st}
\newcommand{\denum}{\delta}

\newcommand{\sdist}{\mathrm{d}_{\mathrm{S}}}

\newcommand{\smax}{\mathrm{A}}

\newcommand{\F}{\mathbb{F}} 

\newcommand{\gabidulin}{\mathcal{G}}

\newtheorem{theorem}{Theorem}
\newtheorem{corollary}{Corollary} 
\newtheorem{lemma}[theorem]{Lemma}
\newtheorem{proposition}{Proposition}

\newtheorem*{problem}{Problem}

\theoremstyle{definition} 
\newtheorem{definition}[theorem]{Definition}

\title[Binary Subspace Codes in Small Ambient Spaces]{Binary Subspace Codes in Small Ambient Spaces}

\author[Daniel Heinlein and Sascha Kurz]{}

\subjclass{Primary 94B05, 05B25, 51E20; Secondary 51E14, 51E22, 51E23}
\keywords{Galois geometry, network coding, subspace code, partial spread}
\email{daniel.heinlein@uni-bayreuth.de}
\email{sascha.kurz@uni-bayreuth.de}

\thanks{The authors were supported by the DFG project ``Ganz\-zahlige Optimierungs\-modelle f\"ur
  Subspace Codes und endliche Geometrie'' (DFG grants KU~2430/3-1, WA~1666/9-1).}


\begin{document}
\maketitle

\centerline{\scshape Daniel Heinlein} \medskip {\footnotesize
  \centerline{Mathematisches Institut, Universit\"at Bayreuth, D-95440
    Bayreuth, Germany} }

\medskip

\centerline{\scshape Sascha Kurz} \medskip {\footnotesize
  \centerline{Mathematisches Institut, Universit\"at Bayreuth, D-95440
    Bayreuth, Germany} }

\bigskip

\begin{abstract}
  Codes in finite projective spaces equipped with the subspace distance have been proposed for error control 
  in random linear network coding. Here we collect the present knowledge on lower and upper bounds for binary 
  subspace codes for projective dimensions of at most $7$. We obtain several improvements of the bounds and 
  perform two classifications of optimal subspace codes, which are unknown so far in the literature.
\end{abstract}


\section{Introduction}
\label{sec_introduction}

For a prime power $q>1$ let $\mathbb{F}_q$ be the finite field with
$q$ elements and $\mathbb{F}_q^v$ the standard vector
space of dimension $v\geq 0$ over $\mathbb{F}_q$. The set of all subspaces
of $\mathbb{F}_q^v$, denoted by $\PG(v-1,\F_q)$, is a finite modular geometric
lattice with meet $X\wedge Y=X\cap Y$ and join $X\vee Y=X+Y$. 
Network-error-correcting codes developed by Cai and Yeung, see \cite{cai2006network,yeung2006network},  
use subspaces of $\PG(v-1,\F_q)$ as codewords. 
We call any set $\mathcal{C}$ of subspaces of $\F_q^v$ a \emph{$q$-ary subspace code}. 
An information-theoretic analysis of the Koetter-Kschischang-Silva model, see 
\cite{koetter2008coding,silva2008rank}, motivates the use of 
the so-called \emph{subspace distance}
\begin{equation}
  \begin{aligned}
    \sdist(X,Y)&=\dim(X+Y)-\dim(X\cap Y)\\
    &=\dim(X)+\dim(Y)-2\cdot\dim(X\cap Y)\\
    &=2\cdot\dim(X+Y)-\dim(X)-\dim(Y).
  \end{aligned}  
\end{equation} 
With this the \emph{minimum distance} in the subspace metric of a
subspace code $\mathcal{C}$ containing at least two codewords is defined as
\begin{equation}
  \sdist(\mathcal{C}):=\min\left\{\sdist(X,Y)\,:\,X,Y\in\mathcal{C},
    X\neq Y\right\}.
\end{equation}
If $\#\mathcal{C}\leq 1$ we formally set $\sdist(\mathcal{C})=\infty$. 
A subspace code $\mathcal{C}$ is called a \emph{constant-dimension code} if all
codewords have the same dimension. A $k$-dimensional subspace is also called $k$-subspace 
and we also write point, line, plane, and solid for $1$-, $2$-, $3$-, and $4$-subspaces, 
respectively. Hyperplanes are $(v-1)$-subspaces in $\F_q^v$, i.e., they have co-dimension 
one. The set of $k$-subspaces is denoted by $\G{v}{k}{q}$.

\begin{definition}
  A \emph{$q$-ary $(v,M,d)$ subspace code}, also referred to as a
  \emph{subspace code with parameters $(v,M,d)_q$}, is a set
  $\mathcal{C}$ of subspaces of $\F_q^v$ with $M=\#\mathcal{C}$ and minimum subspace distance $d$. 
  The \emph{dimension distribution} of $\mathcal{C}$ is the sequence
  $\denum(\mathcal{C})=\left(\delta_0,\delta_1,\dots,\delta_v\right)$
  defined by $\delta_k=\delta_k(\mathcal{C})=\#\{X\in\mathcal{C}\,:\,\dim(X)=k\}$. Two subspace
  codes $\mathcal{C}_1,\mathcal{C}_2$ are said to be \emph{isomorphic}
  if there exists an isometry (with respect to the subspace metric)
  $\phi\colon \F_q^v\to \F_q^v$ between their ambient spaces satisfying
  $\phi(\mathcal{C}_1)=\mathcal{C}_2$.
\end{definition}
The dimension distribution of a subspace 
code may be seen as a $q$-analogue of the Hamming weight distribution
of an ordinary block code. As in the block code case, the quantities
$\delta_k=\delta_k(\mathcal{C})$ are non-negative integers satisfying
$\sum_{k=0}^v \delta_k=M=\#\mathcal{C}$.
\begin{problem}  For a given field size $q$, dimension $v\geq 1$ of the ambient space 
  and minimum distance $d\in\{1,\dots,v\}$ determine the maximum size
  $\smax_q(v,d)=M$ of a $q$-ary $(v,M,d)$ subspace code   and---as a refinement---classify 
  the corresponding optimal codes up to subspace code isomorphism.
\end{problem}
More generally, for subsets $T\subseteq\{0,1,\dots,v\}$, we denote the maximum size
of a $(v,M,d')_q$ subspace code $\mathcal{C}$ with $d'\geq d$ and
$\delta_k(\mathcal{C})=0$ for all $k\in\{0,1,\dots,v\}\setminus T$
by $\smax_q(v,d;T)$, and refer to subspace codes subject to this
dimension restriction accordingly as $(v,M,d;T)_q$ codes.
For $T=\{0,\dots,v\}$ we obtain (general) subspace codes and for $T=\{k\}$ we obtain constant-dimension codes
and are able to treat both cases, as well as generalizations, in a common notation. 
Note that $\smax_q(v,d;T)\geq\smax_q(v,d';T)$ for all $1\leq d\leq d'\leq v$, 
$\smax_q(v,d;T)\leq\smax_q(v,d;T')$ for all $T\subseteq T'\subseteq\{0,1,\dots,v\}$, and 
$\smax_q(v,d;T\cup T')\le \smax_q(v,d;T)+\smax_q(v,d;T')$ for all $T, T'\subseteq\{0,1,\dots,v\}$, see 
e.g.\ \cite[Lemma 2.3]{honold2016constructions}.

While a lot of research has been done on the determination of the numbers 
$\smax_q(v,d;k)=\smax_q(v,d;\{k\})$, i.e., the constant-dimension case, see e.g.\ 
\cite{etzion2016galois,greferath2018network,heinlein2016tables}, only very 
few results are known for $\#T>1$, see e.g.\ \cite{honold2016constructions}. The purpose of 
this paper is to advance the knowledge in the mixed-dimension case and partially solve the 
above problem for $q=2$ and small $v$. 

The Gaussian binomial coefficient $\gauss{v}{k}{q}=\prod_{i=0}^{k-1}\frac{q^{v-i}-1}{q^{k-i}-1}$
gives the number of $k$-dimensional subspaces of $\F_q^v$. Since these numbers grow very quickly, 
especially for $k\approx v/2$, the exact determination of $\smax_q(v,d)$ appears to be an intricate
task---except for some special cases. Even more challenging is the
refined problem of enumerating the isomorphism types of the
corresponding optimal subspace codes (i.e., those of size
$\smax_q(v,d)$). 

In this paper we found new lower bounds for $\smax_2(6,3)$, $\smax_2(7,3)$, and $\smax_2(8,3)$, improved 
upper bounds for $\smax_2(6,3)$, $\smax_2(8,4)$, and $\smax_2(8,5)$, and classify the optimal codes for 
$\smax_2(8,6)$ and $\smax_2(8,7)$.

The remaining part of this paper is structured as follows. In Section~\ref{sec_preliminaries} we introduce 
some notation and preliminary facts. Dimensions $v\le 5$ are treated in Section~\ref{sec_dim_small}. For the 
cases $v\in\{6,7,8\}$ we have devoted sections \ref{sec_dim_6}, \ref{sec_dim_7}, and \ref{sec_dim_8}, respectively. 
A brief summary and conclusion is drawn in Section~\ref{sec_conclusion}.

\section{Preliminaries}
\label{sec_preliminaries}

In this section we summarize some notation and well-known insights that will
be used in the later parts of the paper.

\subsection{Gaussian elimination and representations of subspaces}
Elements in $\PG(v-1,\F_q)$ can be represented by matrices. To this end, let $A\in\mathbb{F}_q^{k\times v}$ be a matrix 
of full rank, i.e., rank $k$. The row-space of $A$ forms a $k$-dimensional subspace of $\F_q^v$, so that we call 
$A$ a \emph{generator matrix} of an element of $\G{v}{k}{q}$. Since the  application of the Gaussian elimination
 algorithm onto a generator matrix $A$ does not change the row-space, we can restrict ourselves onto generator matrices which are 
in \emph{reduced row echelon form} (rref), i.e., the matrix has the shape resulting from a Gaussian elimination.  
This gives a unique and well-known representation. We denote the underlying bijection by $\tau:\G{v}{k}{q}\rightarrow
\left\{A'\in \mathbb{F}_q^{k\times v}\,:\,\operatorname{rk}(A')=k,\, A'\text{ in rref}\right\}$. Slightly 
abusing notation, we apply $\tau$ for all values $1\le k\le v$ without referring to the involved parameters, which 
will be clear from the context. Given a matrix $A\in \mathbb{F}_q^{k\times v}$ of full rank we denote by $p(A)\in\mathbb{F}_2^v$ 
the binary vector whose $k$ $1$-entries coincide with the pivot columns of $A$. For each subspace $U$ of $\F_q^v$ we use the 
abbreviation $p(U)=p(\tau(U))$. Note that the number of ones in $p(U)$, i.e., the weight $\operatorname{wt}(p(U))$, coincides 
with the (vector-space) dimension $\dim(U)=k$. 

\subsection{Maximum rank distance codes}
For the set $\mathbb{F}_q^{m\times n}$ of $m\times n$-matrices over $\mathbb{F}_q$ the 
mapping $d\colon \mathbb{F}_q^{m\times n}\times \mathbb{F}_q^{m\times n}\to\mathbb{N}$ defined 
by $d(X,Y)=\operatorname{rank}(X-Y)$ is a metric, the \emph{rank distance} on $\mathbb{F}_q^{m\times n}$. 
Any subset $\mathcal{C}$ of $\mathbb{F}_q^{m\times n}$ is called \emph{rank-metric code} with 
\emph{minimum distance} $d(\mathcal{C})=\min\{d(X,Y)\,:\,X,Y\in\mathcal{C}, X\neq Y\}$ for $\#\mathcal{C}\ge 2$ 
and $d(\mathcal{C})=\infty$ otherwise. The Singleton-like bound
$$
  \#\mathcal{C}\le q^{\max\{m,n\}\cdot \left(\min\{m,n\}-d(\mathcal{C})+1\right)} 
$$ 
was proven in \cite{delsarte1978bilinear}. If the bound is met, $\mathcal{C}$ is called \emph{maximum rank distance} 
(MRD) code. They exist for all parameters and can e.g.\ be constructed using \emph{linearized polynomials}, i.e., 
$f(x)=\sum_{i=0}^{l} \alpha_ix^{q^i}\in\mathbb{F}_{q^n}[x]$, which are in bijection to $n\times n$-matrices over 
$\F_q$. If the domain for $x$ is restricted to a fixed $k$-dimensional $\F_q$-subspace $W$ of $\F_{q^n}$, then 
we obtain a bijection to $k\times n$-matrices over $\F_q$. The parameter $l$ and the $\alpha_i$ determine the rank. 
\begin{theorem}(Cf.\ \cite{delsarte1978bilinear})
  For integers $1\le\delta \le k\le n$ and a $k$-dimensional $\F_q$-subspace $W$ of $\F_{q^n}$ the union of
  \begin{equation*}
    G'(a_0,\dots,a_{k-\delta})
    =\bigl\{a_0x+a_1x^q+a_2x^{q^2}+\dots+a_{k-\delta}x^{q^{k-\delta}}\,:\,x\in W\bigr\}
  \end{equation*}  
  with $a_i\in\F_{q^n}$ gives an $k\times n$ MRD code with minimum rank distance $\delta$.
\end{theorem}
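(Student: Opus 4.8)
The plan is to realize each coefficient tuple as a $k\times n$ matrix, count the resulting set, and then compare its cardinality against the Singleton-like bound to pin down the minimum distance exactly. Concretely, I would first fix an $\F_q$-basis $w_1,\dots,w_k$ of $W$ and an $\F_q$-basis of $\F_{q^n}$. Every linearized polynomial $f(x)=\sum_{i=0}^{k-\delta} a_ix^{q^i}$ restricts to an $\F_q$-linear map $W\to\F_{q^n}$; writing the images $f(w_1),\dots,f(w_k)$ as rows in the chosen basis of $\F_{q^n}$ yields a matrix in $\F_q^{k\times n}$, and letting $\mathcal{C}$ be the collection of all these matrices as $(a_0,\dots,a_{k-\delta})$ ranges over $\F_{q^n}^{k-\delta+1}$ gives precisely the code $G'$.

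Next I would show this assignment is injective, so that $\#\mathcal{C}=q^{n(k-\delta+1)}$. If two tuples yield the same matrix, their difference is a linearized polynomial of $q$-degree at most $k-\delta$ vanishing on all of $W$. Since the roots of a nonzero such polynomial form an $\F_q$-subspace of dimension at most $k-\delta$, it would have at most $q^{k-\delta}$ roots, yet $W$ alone contributes $q^k>q^{k-\delta}$ of them; hence the difference is the zero polynomial and the tuples coincide.

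Because the construction is additive in the coefficients, $\mathcal{C}$ is closed under subtraction, so its minimum distance equals the smallest rank of a nonzero codeword. For a nonzero $f$ of $q$-degree $l\le k-\delta$, rank-nullity shows that the rank of its matrix is $\dim_{\F_q}f(W)=k-\dim_{\F_q}(\ker f\cap W)$, and the kernel of $f$ on $\F_{q^n}$ has $\F_q$-dimension at most $l\le k-\delta$. Every nonzero codeword therefore has rank at least $\delta$, i.e.\ $d(\mathcal{C})\ge\delta$.

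Finally I would invoke the Singleton-like bound with $m=k\le n$, which here reads $\#\mathcal{C}\le q^{n(k-d(\mathcal{C})+1)}$. Substituting $\#\mathcal{C}=q^{n(k-\delta+1)}$ together with $d(\mathcal{C})\ge\delta$ forces $d(\mathcal{C})=\delta$ and equality in the bound, so $\mathcal{C}$ is an MRD code of minimum rank distance $\delta$. The one genuinely non-routine ingredient is the bound that a nonzero $q$-linearized polynomial of $q$-degree $l$ has at most $q^l$ roots, equivalently an $\F_q$-linear kernel of dimension at most $l$; both the injectivity step and the distance estimate rest on this single fact, so I expect establishing it cleanly to be the crux, with everything else being bookkeeping.
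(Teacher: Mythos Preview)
Your argument is correct and is precisely the standard proof of the Delsarte--Gabidulin construction: count the codewords via injectivity, bound the rank from below using the root bound for $q$-linearized polynomials, and then match the Singleton-like bound. The paper itself does not supply a proof of this theorem; it is stated with a reference to \cite{delsarte1978bilinear} and used as background, so there is nothing in the paper to compare against beyond noting that your approach is the classical one the citation points to.
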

Those codes are often called \emph{Gabidulin codes}, referring to \cite{gabidulin1985theory}. Let $I_k$ denote $k\times k$ 
unit matrix (for an arbitrary field size $q$) and $A|B$ denote the column-wise concatenation of two matrices with the same 
number of rows. For each matrix $M\in\F_q^{k\times n}$ we can obtain a $k$-subspace in $\F_q^{n+k}$ via 
$\tau^{-1}(I_k|M)$, which is called \emph{lifting}. Applying the lifting construction to MRD codes gives so-called 
\emph{lifted MRD} (LMRD) codes. Specialized to Gabidulin codes, a $q$-ary lifted Gabidulin code $\gabidulin=\gabidulin_{v,k,\delta}$
has parameters $(v,q^{(k-\delta+1)(v-k)},2\delta;k)$, where $1\leq\delta\leq k\leq v/2$. It can be defined in a coordinate-free
manner as follows, see\ e.g.\ \cite[Sect.~2.5]{honold2015optimal}: The
ambient space is taken as $V=W\times\F_{q^n}$, where $n=v-k$ and $W$
denotes a fixed $k$-dimensional $\F_q$-subspace of $\F_{q^n}$, and
$\gabidulin$ consists of all subspaces
\begin{equation*}
  G(a_0,\dots,a_{k-\delta})
  =\bigl\{(x,a_0x+a_1x^q+a_2x^{q^2}+\dots+a_{k-\delta}x^{q^{k-\delta}})\,:\,x\in W\bigr\}
\end{equation*}
with $a_i\in\F_{q^n}$. The code $\gabidulin$ forms a geometrically quite regular object. The
most significant property, shared by all lifted MRD codes with the
same parameters, is that $\gabidulin$ forms an exact $1$-cover of the
set of all ($k-\delta$)-subspaces of $V$ that are disjoint\footnote{We say that two subspaces $U$ and 
$W$ are disjoint or intersect trivially if $\dim(U\cap W)=0$.} from the
special $(v-k)$-subspace $S=\{0\}\times\F_{q^n}$.

\subsection{The automorphism group of $\bigl(\PG(v-1,\F_q),\sdist\bigr)$}
Let us start with a description of the automorphism group of the metric space $\PG(v-1,\F_q)$ relative to the subspace
distance. The linear group $\GL(v,\F_q)$ acts on $\PG(v-1,\F_q)$ as a group of $\F_q$-linear isometries. Whenever $q$ is not
prime there are additional semilinear isometries arising from $\Aut(\F_q)$. Moreover, mapping a subspace
$X\subseteq\F_q^v$ to its dual code $X^\perp$ (with respect to the standard inner product) respects the subspace distance and 
hence yields a further automorphism $\pi$ of the metric space $\PG(v-1,\F_q)$. However, $\pi$ reverses the dimension distribution 
of a subspace code. This implies $\smax_q(v,d;T)=\smax_q(v,d;v-T)$, where $v-T=\{v-t\,:\,t\in T\}$. 

\begin{theorem}(E.g.~\cite[Theorem 2.1]{honold2016constructions})\\
  \label{thm:autos}
  Suppose that $v\geq 3$. The automorphism group $G$ of
  $\PG(v-1,\F_q)$, viewed as a metric space with respect to the
  subspace distance, is generated by $\GL(v,\F_q)$,
  $\Aut(\F_q)$ and $\pi$. More precisely, $G$ is the
  semidirect product of the projective general semilinear group
  $\PGaL(v,\F_q)$ with a group of order $2$ acting by matrix
  transposition on $\PGL(v,\F_q)$ and trivially on $\Aut(\F_q)$.  
\end{theorem}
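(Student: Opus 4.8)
The plan is to reformulate the problem in terms of the covering graph (Hasse diagram) of the subspace lattice and then to invoke the fundamental theorem of projective geometry. First I would record that the three listed generators are genuinely isometries: elements of $\GL(v,\F_q)$ and of $\Aut(\F_q)$ preserve both $\dim(X)$ and $\dim(X\cap Y)$, hence $\sdist$, while $\pi$ preserves $\sdist$ by the remark preceding the statement. This yields the inclusion $\langle\GL(v,\F_q),\Aut(\F_q),\pi\rangle\subseteq G$, so only the reverse inclusion is at stake.

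For the reverse inclusion, fix an arbitrary isometry $\phi\in G$. Since $\phi$ preserves $\sdist$, it preserves the relation $\sdist(X,Y)=1$, which holds exactly when one of $X,Y$ covers the other in the subspace lattice; thus $\phi$ is an automorphism of the covering graph. The key combinatorial step is to single out $\{0\}$ and $\F_q^v$ by their degree in this graph. A subspace $Z$ of dimension $k$ is adjacent to its $\gauss{k}{1}{q}$ hyperplanes and to the $\gauss{v-k}{1}{q}$ subspaces covering it, so $\deg(Z)=\gauss{k}{1}{q}+\gauss{v-k}{1}{q}$ for $0<k<v$, whereas $\deg(\{0\})=\deg(\F_q^v)=\gauss{v}{1}{q}$. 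The elementary inequality $q^v+1>q^k+q^{v-k}$ for $1\le k\le v-1$ shows that $\{0\}$ and $\F_q^v$ are the unique vertices of maximum degree, so $\phi$ either fixes both or interchanges them.

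Now I would split into two cases. If $\phi$ fixes $\{0\}$ and $\F_q^v$, then $\dim\phi(Z)=\sdist(\phi(Z),\{0\})=\sdist(Z,\{0\})=\dim(Z)$, so $\phi$ preserves dimension; combined with preservation of the covering relation, lifting along maximal chains shows that $\phi$ is order-preserving, i.e.\ a collineation. The fundamental theorem of projective geometry (applicable since $v\ge 3$) then identifies $\phi$ with an element of $\PGaL(v,\F_q)$, the image of $\langle\GL(v,\F_q),\Aut(\F_q)\rangle$ modulo scalars. If instead $\phi$ interchanges $\{0\}$ and $\F_q^v$, then $\pi\circ\phi$ fixes both (as $\pi$ swaps them) and falls into the first case, whence $\phi\in\pi\cdot\PGaL(v,\F_q)$. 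Together these give $G=\PGaL(v,\F_q)\cup\pi\cdot\PGaL(v,\F_q)=\langle\GL(v,\F_q),\Aut(\F_q),\pi\rangle$.

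For the refined structural claim, I would observe that the dimension-preserving isometries form the index-two normal subgroup $\PGaL(v,\F_q)$, namely the kernel of the homomorphism $G\to\Z/2\Z$ recording whether an isometry preserves or reverses $\dim$, while $\pi$ has order $2$; this yields the semidirect decomposition $G=\PGaL(v,\F_q)\rtimes\langle\pi\rangle$. The conjugation action is then a short direct computation: writing $\phi_A(X)=XA$ for $A\in\GL(v,\F_q)$, the identity $(XA)^\perp=X^\perp(A^\tp)^{-1}$ gives $\pi\phi_A\pi^{-1}=\phi_{(A^\tp)^{-1}}$, so $\pi$ acts on $\PGL(v,\F_q)$ by the automorphism induced by transposition; and since every field automorphism commutes with $\perp$, it acts trivially on $\Aut(\F_q)$. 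The main obstacle is the reduction from an abstract isometry to a collineation, which rests entirely on the degree characterization of $\{0\}$ and $\F_q^v$; once this is in place, the fundamental theorem of projective geometry and the conjugation computation finish the proof.
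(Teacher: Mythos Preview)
The paper does not actually prove this theorem; it merely quotes it from \cite{honold2016constructions} and uses it as a black box. So there is no ``paper's own proof'' to compare against, and your proposal can only be assessed on its own merits.

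Your argument is correct and is essentially the standard route to this result. The identification of $\{0\}$ and $\F_q^v$ as the unique vertices of maximum degree in the covering graph is the right combinatorial lever, and the inequality $q^v+1>q^k+q^{v-k}$ (equivalently $(q^k-1)(q^{v-k}-1)>0$) handles it cleanly for $1\le k\le v-1$. Once $\phi$ is known to fix or swap $\{0\}$ and $\F_q^v$, your reduction to an order-preserving bijection via $\dim\phi(Z)=\sdist(\phi(Z),\{0\})=\dim(Z)$ and propagation along maximal chains is sound, and the appeal to the fundamental theorem of projective geometry (valid for $v\ge3$) finishes the identification with $\PGaL(v,\F_q)$. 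The conjugation computation $\pi\phi_A\pi^{-1}=\phi_{(A^\tp)^{-1}}$ and the observation that field automorphisms commute with $\perp$ (since $\langle\sigma(v),\sigma(w)\rangle=\sigma(\langle v,w\rangle)$) correctly pin down the semidirect action. One cosmetic remark: the phrase ``lifting along maximal chains'' could be unpacked in one sentence (if $X\subset Y$ is a covering and $\phi$ preserves both the covering relation and dimension, then necessarily $\phi(X)\subset\phi(Y)$, and the general containment follows by transitivity), but the logic is clear.
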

In our case $q=2$ the semilinear part is void and we mostly will not use $\pi$, 
so that the isomorphism problem for subspace codes reduces to the determination 
of the orbits of $\GL(v,\F_2)$. 
Note that $\GL(v,\F_q)$ acts transitively on $\G{v}{k}{q}$. Moreover, 
for any integer triple $a,b,c$ satisfying $0\leq a,b\leq v$ and $\max\{0,a+b-v\}\leq c\leq\min\{a,b\}$ the group $\GL(v,\F_q)$ acts
transitively on ordered pairs of subspaces $(X,Y)$ of $\F_q^v$ with $\dim(X)=a$, $\dim(Y)=b$, and $\dim(X\cap
Y)=c$. In other words, the corresponding Grassmann graph is distance-regular, where the graph distance is half 
the subspace distance. In the later parts of the paper we repeatedly classify subspace codes up to isomorphism. 
To this end we utilize a software package developed by Thomas Feulner, see \cite{feulner2009automorphism,ubt_epub42}. 

\subsection{Optimization problems for $\smax_q(v,d;T)$}
\label{subsec_opt_formulations}
The problem of the determination of $\smax_q(v,d;T)$ can be easily described as a maximum clique problem. To this end we 
build up a graph $\mathcal{G}$ with the subspaces of $\F_q^v$ with dimensions in $T$ as vertices. Two vertices are joined by 
an edge iff the subspace distance between the two corresponding subspaces is at least $d$. This may be directly translated to 
an integer linear programming (ILP) formulation. However, tighter, with respect to the integrality gap, i.e., the difference 
between the optimal target value of the linear relaxation and the original problem, formulations are possible. For modeling 
variants in the constant-dimension case we refer e.g.\ to \cite{kohnert2008construction}. In order to allow a compact representation, we
denote the ball in $\F_q^v$ with radius $r$, with respect to the subspace distance, and center $W$, for all $W\in\PG(v-1,\F_q)$, 
by $B_r(W)=\left\{U\le\F_q^v\,:\, \sdist(u,W)\le r\right\}$. 
\begin{proposition}
  \label{prop_ILP1}
  For odd $d$ and all suitable other parameters we have:
  \begin{align*}
    \smax_q(v,d;T) &= \max \sum_{k\in T} \delta_k&&\text{subject to}&\\
    \sum_{U\in B_{(d-1)/2}(W)} x_U &\le  1&&\forall W\le \F_q^v&\\
    \sum_{U\in\G{v}{k}{q}} x_U &= \delta_k&&\forall 0\le k\le v&\\
    x_U&\in \{0,1\}&&\forall U\le\F_q^v.&
  \end{align*}
\end{proposition}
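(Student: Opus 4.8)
The plan is to read the program as a standard packing (maximum-clique) model and to prove the claimed identity by establishing the two inequalities $\smax_q(v,d;T)\le(\mathrm{opt})$ and $(\mathrm{opt})\le\smax_q(v,d;T)$ for its optimal value $(\mathrm{opt})$. Throughout I would use that $\sdist$ is a metric on the subspace lattice, in particular the triangle inequality $\sdist(X,Z)\le\sdist(X,Y)+\sdist(Y,Z)$, which is available since $\PG(v-1,\F_q)$ was already treated as a metric space. Note that the equalities $\sum_{U\in\G{v}{k}{q}}x_U=\delta_k$ only introduce $\delta_k$ as the number of selected subspaces of dimension $k$ and carry no further content, so the objective is literally the number of selected subspaces whose dimension lies in $T$.

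For $\smax_q(v,d;T)\le(\mathrm{opt})$, I would take an optimal $(v,M,d';T)_q$ code $\mathcal{C}$ with $d'\ge d$ and set $x_U=1$ precisely for $U\in\mathcal{C}$. Then $\delta_k=0$ for $k\notin T$, so the objective equals $\sum_{k\in T}\delta_k=M=\smax_q(v,d;T)$. Feasibility reduces to the ball constraints: if two distinct codewords $U_1,U_2$ both lay in some $B_{(d-1)/2}(W)$, the triangle inequality would force $\sdist(U_1,U_2)\le\tfrac{d-1}{2}+\tfrac{d-1}{2}=d-1<d$, contradicting $\sdist(\mathcal{C})\ge d$. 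Hence each ball meets $\mathcal{C}$ at most once and $x$ is feasible.

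For the reverse inequality I would start from an arbitrary feasible $x$, set $\mathcal{C}'=\{U\le\F_q^v:x_U=1\}$, and aim to show $\sdist(\mathcal{C}')\ge d$. The decisive step, and the main obstacle, is a betweenness property of the subspace metric: for subspaces $U_1,U_2$ with $s:=\sdist(U_1,U_2)$ and integers $a,b\ge 0$ with $a+b=s$ there is a subspace $W$ with $\sdist(U_1,W)=a$ and $\sdist(W,U_2)=b$. I would prove it by descending from $U_1$ to $M:=U_1\cap U_2$ and then ascending to $U_2$, one dimension at a time; since $U_1\cap V_i=M$ for every subspace $V_i$ on the ascending part, the $i$-th vertex $V_i$ satisfies $\sdist(U_1,V_i)=i$, the path has length $\dim U_1+\dim U_2-2\dim M=s$, and $W=V_a$ does the job.

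Granting betweenness, suppose $U_1\ne U_2$ in $\mathcal{C}'$ had $s:=\sdist(U_1,U_2)\le d-1$. As $d$ is odd, $r:=(d-1)/2$ is an integer and $s\le 2r$; taking $a=\lfloor s/2\rfloor$ yields a center $W$ with $\sdist(U_1,W)=\lfloor s/2\rfloor\le r$ and $\sdist(W,U_2)=\lceil s/2\rceil\le r$, whence $U_1,U_2\in B_r(W)$ and $\sum_{U\in B_r(W)}x_U\ge 2$, contradicting feasibility. Thus $\sdist(\mathcal{C}')\ge d$, and the subcode $\mathcal{C}=\{U\in\mathcal{C}':\dim U\in T\}$ has dimensions in $T$, minimum distance at least $d$, and size $\sum_{k\in T}\delta_k$ equal to the objective value of $x$; therefore this value is at most $\smax_q(v,d;T)$. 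Combining both inequalities gives equality. It is precisely the oddness of $d$ that makes $r$ an integer and lets both half-distances fit into a common ball $B_r(W)$, which is why the hypothesis cannot be dropped in this form.
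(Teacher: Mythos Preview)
Your proof is correct and follows essentially the same approach as the paper. Both directions use the triangle inequality for feasibility and construct, for two subspaces at distance $s\le d-1$, a center $W$ on a geodesic through their intersection $M=U_1\cap U_2$; your path $U_1\supset\cdots\supset M\subset\cdots\subset U_2$ and the paper's explicit $W=\langle U\cap U',W''\rangle$ with $W''\le Z$ produce the same midpoint. The only cosmetic difference is that you package the construction as a general betweenness property of the subspace metric, whereas the paper writes out the single needed instance directly (after assuming $\dim U\le\dim U'$).
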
 
\begin{proof}
  Whenever $\dim(U)\in T$, the binary variables $x_U$ have the interpretation $x_U=1$ iff $U\in\mathcal{C}$. The target function counts 
  the number of codewords with dimensions in $T$. (If $\dim(U)\notin T$, then we may set $x_U=0$ without violating any constraints or 
  changing the target function.) It remains to check that each feasible solution of the ILP above  corresponds to a subspace code with minimum subspace distance at 
  least $d$ and that each such code corresponds to a feasible solution of the ILP. For the latter, consider $U,U'\in\mathcal{C}$. Since 
  $\sdist(U,U')\ge d$, not both $U$ and $U'$ can be contained in $B_{(d-1)/2}(W)$ as $d$ is a metric. For the other direction we need to 
  ensure that for all subspaces $U$, $U'$ in $\F_q^v$ with $\sdist(U,U')\le d-1$ there exists $W\le \F_q^v$ with $U,U'\in B_{(d-1)/2}(W)$. 
  To this end note that $\sdist(X,Y)=\dim(X)+\dim(Y)-2\dim(X\cap Y)$ implies $d_1:=\sdist(U,W')=\dim(U)-\dim(W')$ and 
  $d_2:=\sdist(U',W')=\dim(U')-\dim(W')$ for $W'=U\cap U'$, so that $\sdist(U,U')=\sdist(U,W')+\sdist(U',W')$, i.e., $d_1+d_2\le d-1$. 
  W.l.o.g.\ we assume $\dim(U)\le \dim(U')$, so that $d_1\le d_2$. Write $U'=W'\oplus Z$ and choose $W''\le Z$ of dimension 
  $d_3=\left\lfloor\left(d_2-d_1\right)/2\right\rfloor\le\dim(Z)$. Then, for $W=\langle W',W''\rangle$ we have $\sdist(U,W)=d_1+d_3$, 
  $\sdist(U',W)=d_2-d_3$, $d_1+d_3\le (d-1)/2$, and $d_2-d_3\le (d-1)/2$.     
\end{proof}

If $T\neq\{0,1,\dots,v\}$, then some variables and constraints might be redundant, which however 
is automatically observed in the presolving step of the state-of-the-art ILP solvers. Based on known upper bounds on $\smax_q(v',d';k)$ 
the additional inequalities 
\begin{equation}
  \label{ie_add}
  \sum_{U\in\G{v}{k}{q}\,:\, U\text{ incident with } L} x_U +\mathbf{1}_{|k-l|<d}\cdot ax_L\le a
\end{equation} 
for all $L\in\G{v}{l}{q}$, where $0\le k\le v$ and $0\le l\le v$ are integers, $a\ge \smax_q(l,d;k)$ for $k\le l$, $a\ge \smax_q(v-l,d,k-l)$ for 
$k>l$, and the indicator function $\mathbf{1}_{|k-l|<d}$ is $1$ if $|k-l|<d$ and $0$ otherwise, are valid. For $k\le l$ the condition 
$U$ incident with $L$ means $U\le L$ and there are of course at most $\smax_q(l,d;k)$ $k$-subspaces contained in $L$. If 
$|k-l|<d$ and $L\in\mathcal{C}$, then no $k$-subspace contained in $L$ can be a codeword due to the minimum subspace distance. For 
$k>l$ the condition $U$ incident with $L$ means $L\le U$ and a similar reasoning applies. However, many $(k,l,a)$ triples lead to redundant 
constraints. Clearly if $a<a'$, then Inequality~(\ref{ie_add}) for $(k,l,a)$ implies Inequality~(\ref{ie_add}) for $(k,l,a')$. Moreover, 
if $\smax_q(l,d;k)=1$ for $k\le l$ or $\smax_q(v-l,d,k-l)=1$ for $k>l$, then Inequality~(\ref{ie_add}) for $(k,l,a)$ is implied 
by the constraints from the ILP of Proposition~\ref{prop_ILP1}. Removing these redundancies, we obtain the following more manageable 
set of additional constraints:
\begin{lemma}
  \label{lemma_additional_inequalities}
  Let $S\subseteq \mathbb{N}^3$ be given by
  $S=\left\{(k,l,\smax_q(l,d;k)) \,:\, 1\le k\le l\le v-1, \smax_q(l,d;k)\ge 2\right\}$
  $\cup$ $\left\{(k,l,\smax_q(v-l,d;k-l)) \,:\, 1\le l< k\le v-1, \smax_q(v-l,d;k-l)\ge 2\right\}$.
  If we add the inequalities from (\ref{ie_add}) for all $(k,l,a)\in S$ to the ILP formulation of Proposition~\ref{prop_ILP1},  
  we obtain an ILP formulation for the determination of $\smax_q(v,d;T)$, where any further inequality 
  from (\ref{ie_add}) is redundant.
\end{lemma}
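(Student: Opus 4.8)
The plan is to prove the statement in two stages: first that adjoining the inequalities indexed by $S$ still yields a correct formulation of $\smax_q(v,d;T)$, and then that every remaining instance of (\ref{ie_add}) is redundant. For the first stage I would rely on Proposition~\ref{prop_ILP1}: its integer feasible solutions correspond exactly to the subspace codes of minimum distance at least $d$, with the objective counting the codewords of dimension in $T$. As recalled before the lemma, every inequality (\ref{ie_add}) is satisfied by all such codes, since for $k\le l$ the codewords below a fixed $L$ form a constant-dimension code inside $L\cong\F_q^l$ (hence at most $\smax_q(l,d;k)$ of them), for $k>l$ the codewords above $L$ pass under $\F_q^v\to\F_q^v/L\cong\F_q^{v-l}$ to a constant-dimension code (hence at most $\smax_q(v-l,d;k-l)$), and the term $\mathbf{1}_{|k-l|<d}\,a\,x_L$ records that a codeword $L$ excludes all incident $k$-subspaces once $|k-l|<d$. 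Because the $S$-inequalities are a subfamily of these valid inequalities, adjoining them removes no feasible code, so the optimum is unchanged.

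For the second stage I would enumerate the admissible triples $(k,l,a)$ and show that those outside $S$ fall under the two reduction rules stated before the lemma. If $a$ exceeds the minimal admissible value, then the left-hand side of (\ref{ie_add}) is unchanged while the right-hand side only grows, so the inequality is dominated by the one with that minimal value; the latter lies in $S$ when the minimal value is at least $2$, and is otherwise treated below. If the minimal value equals $1$, I would argue redundancy against the base ILP. When $|k-l|\ge d$ this is clean: the incident $k$-subspaces all lie in a single ball, namely $B_{(d-1)/2}(\{0\})$ when $k\le(d-1)/2$ and $B_{(d-1)/2}(\F_q^v)$ when $v-k\le(d-1)/2$, so the packing constraint $\sum_{U\in B_{(d-1)/2}(W)}x_U\le 1$ for that center already implies the bound. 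When $|k-l|<d$ the incident $k$-subspaces together with $L$ are pairwise at subspace distance below $d$, so at most one of them can be a codeword; the inequality therefore holds for every integer feasible solution, which is the sense in which it is implied by the base ILP. Discarding these two families leaves precisely the triples with $1\le k,l\le v-1$ and minimal admissible value at least $2$, i.e.\ the two sets whose union is $S$.

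The step I expect to be the main obstacle is the careful treatment of the degenerate and boundary triples, so that the enumeration is exhaustive and the reduction is honest. For $k=l$ no $k$-subspace is properly incident with $L$, so (\ref{ie_add}) must be read with strict incidence and then collapses to the void bound $x_L\le 1$; reading it with $U=L$ included would manufacture the spurious $2x_L\le 1$, which has to be excluded explicitly. For $l\in\{0,v\}$ the incident $k$-subspaces exhaust $\G{v}{k}{q}$, so (\ref{ie_add}) becomes the global bound $\sum_{U\in\G{v}{k}{q}}x_U\le\smax_q(v,d;k)$; its redundancy is not a single packing constraint but the integer-level fact that the $k$-dimensional codewords of any code form a constant-dimension $(v,\cdot,d;k)_q$ code. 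The symmetric boundary values $k\in\{0,v\}$ are easier, since the relevant ambient space then carries a unique subspace and the minimal value is again $1$. I would emphasize that for the $|k-l|<d$, minimal-value-$1$ family and for the global bounds the redundancy is at the level of integer feasibility rather than of the linear relaxation; once this is made precise and the special cases are settled, matching the surviving triples against the two defining families of $S$ is routine bookkeeping, and the lemma follows.
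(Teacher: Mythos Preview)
Your proposal is correct and follows the same line as the paper, which in fact gives no separate proof of the lemma: it states the two reduction rules (monotonicity in $a$, and redundancy when the minimal admissible value is $1$) immediately before the lemma and then presents the lemma as the result of ``removing these redundancies.'' Your two-stage argument---validity of the added constraints via Proposition~\ref{prop_ILP1}, then case analysis using those two rules---is exactly this reasoning spelled out in detail.

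Where you go beyond the paper is in the boundary analysis, and this is genuinely useful. The paper is silent on the $k=l$ case, and as you observe, reading incidence non-strictly there would produce the spurious constraint $2x_L\le 1$; the lemma is only correct because $\smax_q(l,d;l)=1$ keeps this triple out of $S$, and the ``any further inequality is redundant'' clause must implicitly exclude (or re-read) it. Your remark that the minimal-value-$1$ case with $|k-l|<d$ is redundant only at the level of integer feasibility, not of the LP relaxation, is also a sharpening: the paper's phrase ``implied by the constraints of Proposition~\ref{prop_ILP1}'' is ambiguous on this point, and your reading is the correct one for the purpose of the lemma (which concerns an ILP formulation, not its relaxation). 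Your treatment of $|k-l|\ge d$ via the single ball $B_{(d-1)/2}(\{0\})$ or $B_{(d-1)/2}(\F_q^v)$ is clean and does give LP-level redundancy there.
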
    

While the number of constraints in the ILP of Lemma~\ref{lemma_additional_inequalities} is larger than in the ILP of Proposition~\ref{prop_ILP1}, 
solution times typically are smaller. Any known upper bound $\smax_q(v,d;T')\le\Lambda$ for $T'\subseteq T$ can of course 
be incorporated by $\sum_{i\in T'}\delta_i\le \Lambda$. If $\smax_q(l,d;k)$ or $\smax_q(v-l,d;k-l)$ is not known, 
then we may take the smallest known upper bound as value of $a$ in the specification of $S$. 

As an example we consider the case $\smax_2(6,3)$. Here, the ILP of Proposition~\ref{prop_ILP1} reads:  
\begin{align*}
  \max \sum_{k=0}^6 \delta_k &&& \text{subject to}\\
    x_W+\sum_{U\in\G{6}{1}{2}} x_U &\le 1 &&\forall W\in\G{6}{0}{2}\\
    \sum_{U\in\G{6}{0}{2}\,:\,U\le W} x_U\,+\,x_W+ \sum_{U\in\G{6}{2}{2}\,:\,W\le U} x_U &\le1&&\forall W\in\G{6}{1}{2}\\
    \sum_{U\in\G{6}{1}{2}\,:\,U\le W} x_U\,+\,x_W\,+\,\sum_{U\in\G{6}{3}{2}\,:\,W\le U} x_U &\le1&&\forall W\in\G{6}{2}{2}\\    
    \sum_{U\in\G{6}{2}{2}\,:\,U\le W} x_U\,+\,x_W\,+\,\sum_{U\in\G{6}{4}{2}\,:\,W\le U} x_U &\le1&&\forall W\in\G{6}{3}{2}\\
    \sum_{U\in\G{6}{3}{2}\,:\,U\le W} x_U\,+\,x_W\,+\,\sum_{U\in\G{6}{5}{2}\,:\,W\le U} x_U &\le1&&\forall W\in\G{6}{4}{2}\\
    \sum_{U\in\G{6}{4}{2}\,:\,U\le W} x_U\,+\,x_W\,+\,\sum_{U\in\G{6}{6}{2}\,:\,W\le U} x_U &\le1&&\forall W\in\G{6}{5}{2}\\
    \sum_{U\in\G{6}{5}{2}\,:\,U\le W} x_U\,+\,x_W\ &\le 1&&\forall W\in\G{6}{6}{2}\\
    \sum_{U\in\G{6}{k}{2}} x_U &= \delta_k&&\forall 0\le k\le 6\\
    x_U&\in \{0,1\}&&\forall U\le\F_2^6.
\end{align*}

Using the exact values of $\smax_2(v,3;k)$ for $v\le 5$, the set $S$ of Lemma~\ref{lemma_additional_inequalities} 
can be chosen as
$$
  S=\left\{(2,4,5), (2,5,9), (3,1,9), (3,5,9), (4,1,9), (4,2,5)\right\}
$$ 
and the additional inequalities are given by 
\begin{eqnarray*}
  \sum_{K\in\G{6}{2}{2}\,:\, K\le L} x_K\,+\, 5x_L \le 5\quad \forall L\in\G{6}{4}{2}\\
  \sum_{K\in\G{6}{2}{2}\,:\, K\le L} x_K\phantom{\,+\, 9x_L}\ \le 9\quad \forall L\in\G{6}{5}{2}\\
  \sum_{K\in\G{6}{3}{2}\,:\, L\le K} x_K\,+\, 9x_L \le 9\quad \forall L\in\G{6}{1}{2}\\
  \sum_{K\in\G{6}{3}{2}\,:\, K\le L} x_K\,+\, 9x_L \le 9\quad \forall L\in\G{6}{5}{2}\\
  \sum_{K\in\G{6}{3}{2}\,:\, L\le K} x_K\phantom{\,+\, 9x_L} \le 9\quad \forall L\in\G{6}{1}{2}\\
  \sum_{K\in\G{6}{4}{2}\,:\, L\le K} x_K\,+\, 5x_L \le 5\quad \forall L\in\G{6}{2}{2}.
\end{eqnarray*}

We remark that the stated inequalities of Proposition~\ref{prop_ILP1} are also valid for even $d$. However, subspaces with subspace distance 
$d-1$ need to be excluded by additional inequalities. To this end let $a$, $b$, and $i$ be non-negative integers 
with $a+b-2i=d-1$ and $a<b$. With $\Lambda=A_q(v-i,d;b-i)$ the constraints
$$
  \sum_{U\in\G{v}{a}{q}\,:\,W\le U} \Lambda x_U\,+\,\sum_{U\in\G{v}{b}{q}\,:\,W\le U} x_U \le\Lambda \quad\forall W\in\G{v}{i}{q}
$$ 
prevent the existence of an $a$-subspace $A$ and $b$-subspace $B$ with $A\cap B=W$ and $A,B\in\mathcal{C}$. Here, $\Lambda$ may 
be replaced by any known upper bound for $A_q(v-i,d;b-i)$. We remark that there are also other inequalities with all coefficients 
being equal to $1$ achieving the same goal. However, those inequalities are more numerous and out of the scope of this paper.

A common method to reduce the complexity of the problem of the determination of $A_q(v,d)$, see \cite{kohnert2008construction} for 
the constant-dimension case, is to prescribe automorphisms, which results in lower bounds. For each automorphism $\varphi$ we can 
add the constraints $x_U=x_{\varphi(U)}$ for 
all $U\le\F_q^v$, which reduces the number of variables. Due to the group structure, several inequalities will also become identical, 
so that their number is also reduced. This approach is also called Kramer-Mesner method and the reduced system can be easily stated 
explicitly, see \cite{kohnert2008construction} for the constant-dimension case. Of course the optimal solution of the corresponding 
ILP gives just a lower bound, which however matches the upper bound for some instances and well chosen groups of automorphisms. An 
example for the prescription of automorphisms is given in Section~\ref{sec_dim_6}.   

\section{Subspace codes of very small dimensions and general results}
\label{sec_dim_small}

For each dimension $v$ of the ambient space the possible minimum subspace distances are contained in $\{1,2,\dots, v\}$. Arguably, 
the easiest case is that of minimum subspace distance $d=1$. Since $\sdist(X,Y)\ge 1$ for any two different subspaces of $\F_q^v$, we have 
$\smax_q(v,1)=\sum_{i=0}^v \gauss{v}{i}{q}$ and there is a unique isomorphism type, i.e., taking all subspaces. The quantity $\smax_q(v,2)$ 
was determined in \cite{ahlswede2009error}. For even $v$ we have $\smax_q(v,2)=\sum_{\underset{i\equiv 0\mod 2}{0\le i\le v}} \gauss{v}{i}{q}$ 
and for odd $v$ we have $\smax_q(v,2)=\sum_{\underset{i\equiv 0\mod 2}{0\le i\le v}} \gauss{v}{i}{q}=\sum_{\underset{i\equiv 1\mod 2}{0\le i\le v}} 
\gauss{v}{i}{q}$. The proof is based on a more general result of Kleitman \cite{kleitman1975extremal} on finite posets with the so-called LYM 
property. Matching examples are given by the sets of all subsets of $\F_q^v$ with either all even or all odd dimensions. The fact that these 
examples give indeed all isomorphism types was proven in \cite[Theorem 3.4]{honold2016constructions}. 

Next we look at large values for $d$. 
The case $\smax_q(v,v)$ was treated in \cite[Theorem 3.1]{honold2016constructions}. If $v$ is odd, then $\smax_q(v,v)=2$ and there are 
exactly $(v+1)/2$ isomorphism types consisting of an $i$-subspace and a disjoint $(v-i)$-subspace for $0\le i\le(v-1)/2$. If $v=2m$ is even, then 
$\smax_q(v,v)=\smax_q(v,v;m)=q^{m}+1$. Indeed, all codewords have to have dimension $m$. The  number  of  isomorphism  classes  of 
such  spreads  or, equivalently,  the  number  of  equivalence  classes  of  translation  planes  of  order $q^{m}$ with kernel containing
$\F_q$ under the equivalence relation generated by isomorphism and transposition \cite{dembowski2012finite,johnson2007handbook}, is generally 
unknown (and astronomically large even for modest parameter sizes). For $v\in\{2,4,6\}$ the Segre spread, see \cite{segre1964teoria}, is 
unique up to isomorphism. This is pretty easy to check for $v<6$ and done for $v=6$ in \cite{hall1956uniqueness}. For $v=8$ there 
are $8$ isomorphism types, see \cite{dempwolff1983classification}.

The case $\smax_q(v,v-1)$ was treated in \cite[Theorem 3.2]{honold2016constructions}. If $v=2k$ is even, then 
$\smax_q(v,v-1)=\smax_q(v,v-1;k)=q^k+1$. Beside dimension $k$ the codewords can only have dimension $k-1$ or $k+1$, where the latter 
cases can occur at most one each and all combinations are possible. It is well known that a partial $k$-spread of cardinality $q^k-1$ can 
always be extended to a $k$-spread. Having the $k$-spreads at hand for $k\le 4$ the resulting isomorphism types for the subspace codes can be 
determined easily. For $v=4$ there are $4$ and for $v=6$ there are $5$ isomorphism types, see \cite[Section 4]{honold2016constructions}. 
For $v=8$ we have removed one or two solids from the $8$ isomorphism types of $4$-spreads and obtained $17$ and $34$ isomorphism types of 
cardinality $16$ and $15$, respectively. The extension with $3$-subspaces and $5$-subspaces gives $31$ non-isomorphic subspaces codes 
with dimension distributions $\delta=(0,0,0,1,16,0,0,0,0)$ or $\delta=(0,0,0,0,16,1,0,0,0)$, respectively, and $502$ non-isomorphic subspaces codes 
with dimension distributions $\delta=(0,0,0,1,15,1,0,0,0)$. So, in total we have $572$ non-isomorphic $(8,17,7)_2$ codes.  

If $v=2k+1\geq 5$ is odd then $\smax_q(v,v-1)=\smax_q(v,v-1;k)=q^{k+1}+1$. The dimension distributions realized by optimal subspace 
codes are $(\delta_{k-1},\delta_k,\delta_{k+1},\delta_{k+2})=(0,q^{k+1}+1,0,0)$, $(0,0,q^{k+1}+1,0)$, $(0,q^{k+1},1,0)$, $(0,1,q^{k+1},0)$, 
$(0,q^{k+1},0,1)$, and $(1,0,q^{k+1},0)$. It is necessary to exclude the case $v=3$, where $\smax_2(3,2)=8>5$. For $v=5$ there are exactly 
$4+4+1+1+2+2=14$ isomorphism types, see \cite[Subsection 4.3]{honold2016constructions}. For $v=7$ there are exactly 
$715+715+37+37+176+176=1856$ isomorphism types, see \cite{honold2016classification}.

The case $\smax_q(v,v-2)$ is even more involved and only partial results are known, see \cite[Theorem 3.3]{honold2016constructions}. 
If $v=2k\ge 8$ is even, then $\smax_q(v,v-2)=\smax_q(v,v-2;k)$. Moreover, $\smax_2(6,4)=\smax_2(6,4;3)=77$, see 
\cite{honold2015optimal}, and $\smax_2(8,6)=\smax_2(8,6;4)=257$, see \cite{heinlein2017classifying}, are known. The number of 
isomorphism types for $v=6$ and $v=8$ is treated in Section~\ref{sec_dim_6} and Section~\ref{sec_dim_8}, respectively. If 
$v=2k+1\ge 5$ is odd, then $\smax_q(v,v-2)\in\left\{2q^{k+1}+1,2q^{k+1}+2\right\}$, $\smax_q(5,3)=2q^3+2$, and $\smax_2(7,5)=2\cdot 2^4+2=34$.
For $v=5$ there are $48\,217$ isomorphism types, see \cite[Subsection 4.4]{honold2016constructions}. For $v=7$ there are $39$ 
isomorphism types, see \cite{honold2016classification}, each consisting of $17$ planes and $17$ solids.

\section{Subspace codes in $\F_2^6$}
\label{sec_dim_6}

For 
subspace codes in $\F_2^6$ the only cases not treated in Section~\ref{sec_dim_small} are minimum subspace distance $d=3$ and $d=4$. 
For the latter we want to give all details, since the inspection in \cite{honold2016constructions} is rather scarce. First we remark 
$\smax_2(6,4;3)=77$, see \cite{honold2015optimal}. We start with an observation, due to Thomas Honold, that might be of independent 
interest: 
\begin{lemma}
  \label{lemma_point_a_q_6_4_3}
  Let $\mathcal{C}$ be a set of planes in $\mathbb{F}_q^6$ with minimum subspace distance at least $4$. Let $r(P)$ denote the number 
  of codewords of $\mathcal{C}$ that contain point $P$ and $r(H)$ denote the number of codewords of $\mathcal{C}$ that are contained 
  in hyperplane $H$. For every point $P$ or hyperplane $H$ we have $\#\mathcal{C}\le q^3(q^3+1)+r(P)$ and $\#\mathcal{C}\le q^3(q^3+1)+r(H)$, 
  respectively. Moreover, $r(P)\le q^3+1$ and $r(H)\le q^3+1$.
\end{lemma}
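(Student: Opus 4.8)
The plan is to reduce both assertions to their point versions and then obtain the hyperplane versions for free by duality. Since every codeword is a plane, for $X,Y\in\mathcal{C}$ we have $\sdist(X,Y)=6-2\dim(X\cap Y)$, so the hypothesis $\sdist(X,Y)\ge 4$ is equivalent to $\dim(X\cap Y)\le 1$: any two distinct codewords meet in at most a point. The first bound to establish is $r(P)\le q^3+1$. I would fix a point $P$ and look at the codewords through $P$. Any two of them contain $P$ and meet in at most a point, hence meet exactly in $P$. Passing to the quotient $\F_q^6/P\cong\F_q^5$, each such plane becomes a $2$-subspace, and ``meeting exactly in $P$'' becomes ``meeting trivially''; the assignment is a bijection preserving cardinality. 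Thus the codewords through $P$ correspond to a partial line spread of $\F_q^5$, whose size is at most $\smax_q(5,4;2)=q^3+1$, giving $r(P)\le q^3+1$.

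For the inequality $\#\mathcal{C}\le q^3(q^3+1)+r(P)$ I would split $\mathcal{C}$ into the $r(P)$ codewords through $P$ and the set $\mathcal{C}_0$ of codewords avoiding $P$, and bound $\#\mathcal{C}_0\le q^3(q^3+1)$ by a double count over the hyperplanes that do not contain $P$; there are exactly $q^5$ of these. On the one hand, a fixed $X\in\mathcal{C}_0$ is contained in $q^2+q+1$ hyperplanes, of which the $q+1$ hyperplanes through the solid $\langle X,P\rangle$ are precisely those passing through $P$, so $X$ lies in exactly $q^2$ hyperplanes missing $P$. On the other hand, a hyperplane $H$ with $P\not\le H$ is a copy of $\F_q^5$, and the codewords inside it are $3$-subspaces of $\F_q^5$ pairwise meeting in at most a point, hence at most $\smax_q(5,4;3)=q^3+1$ of them; moreover every codeword contained in $H$ automatically avoids $P$ and so lies in $\mathcal{C}_0$. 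Counting the incidences $(X,H)$ with $X\in\mathcal{C}_0$, $P\not\le H$, and $X\le H$ in two ways yields $q^2\cdot\#\mathcal{C}_0\le q^5(q^3+1)$, whence $\#\mathcal{C}_0\le q^3(q^3+1)$ and $\#\mathcal{C}=r(P)+\#\mathcal{C}_0\le q^3(q^3+1)+r(P)$.

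Finally, the hyperplane statements follow by duality. The map $\pi\colon X\mapsto X^\perp$ is an isometry of $\bigl(\PG(5,\F_q),\sdist\bigr)$ by Theorem~\ref{thm:autos}, it preserves the dimension of planes, and it turns the incidence $P\le X$ into $X^\perp\le P^\perp$ with $P^\perp$ a hyperplane. The dual code $\mathcal{C}^\perp=\{X^\perp:X\in\mathcal{C}\}$ again consists of planes of minimum distance at least $4$ and satisfies $\#\mathcal{C}^\perp=\#\mathcal{C}$, so applying the already proved point bounds to $\mathcal{C}^\perp$ at the point $P'=H^\perp$ gives $r(H)\le q^3+1$ and $\#\mathcal{C}\le q^3(q^3+1)+r(H)$, using that every hyperplane $H$ equals $P'^\perp$ for a unique point $P'$.

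The only non-elementary ingredient, and the step I expect to be the crux, is the sharp value $\smax_q(5,4;3)=\smax_q(5,4;2)=q^3+1$ for the maximal partial line spread in $\F_q^5$ (these two numbers agree by the dimension-reversing symmetry $\smax_q(v,d;T)=\smax_q(v,d;v-T)$). The naive volume bound only yields $q^3+q$, which is too weak here: it would give $\#\mathcal{C}_0\le q^6+q^4$ instead of $q^3(q^3+1)$. I would therefore cite the classical result on maximal partial line spreads for this exact value rather than reprove it, and the remaining work is just the elementary dimension bookkeeping of the two incidence counts together with the duality, which should go through without difficulty.
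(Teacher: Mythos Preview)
Your proof is correct and is essentially the dual of the paper's argument: both use the partial-spread value $\smax_q(5,4;2)=\smax_q(5,4;3)=q^3+1$ together with a double count, but the paper fixes a hyperplane $H$, sums $r(P)$ over the $q^5$ points outside $H$ to bound the codewords not contained in $H$, and then dualizes to obtain the point inequality, whereas you fix a point $P$, sum over the $q^5$ hyperplanes avoiding $P$, and dualize in the opposite direction. The two arguments are interchangeable and rely on exactly the same ingredients.
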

\begin{proof}
  We have $r(P)\le A_q(5,4;2)=q^3+1$, so that we also conclude by orthogonality $r(H)\le q^3+1$. Let $a_2(H)$ denote the number of codewords 
  of $\mathcal{C}$ whose intersection with $H$ is $2$-dimensional. With this we have 
  $a_2(H)=\frac{1}{q^2}\sum_{P\not\le H} r(P)\le\frac{1}{q^2}\cdot q^5\cdot (q^3+1)=q^3(q^3+1)$ and 
  $\#\mathcal{C}=a_2(H)+r(H)\le q^3(q^3+1)+r(H)$. By orthogonality we also obtain $\#\mathcal{C}\le q^3(q^3+1)+r(P)$.
\end{proof}

\begin{proposition}
  We have $\smax_2(6,4)=77$ and there are exactly $5$ optimal isomorphism types each containing planes only.
\end{proposition}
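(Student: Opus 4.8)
The plan is to exploit the already established value $\smax_2(6,4;3)=77$ (see \cite{honold2015optimal}) and reduce the statement to the claim that \emph{every} code containing a codeword of dimension different from $3$ has strictly fewer than $77$ codewords; the enumeration of exactly $5$ isomorphism types then follows from the classification of optimal constant-dimension codes with these parameters in \cite{honold2015optimal}. First I would record the elementary restrictions that $\sdist(X,Y)=\dim X+\dim Y-2\dim(X\cap Y)\ge 4$ imposes for $q=2$: no two points and no two $5$-spaces can both be codewords, a point and a line cannot coexist (their distance is at most $3$) and dually a solid and a $5$-space cannot coexist, a $0$-space forces all further codewords to have dimension $\ge 4$ while a $6$-space forces dimension $\le 2$, and a plane excludes both the $0$- and the $6$-space.

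The case without any plane is then immediate: the codewords of dimension in $\{1,2\}$ form either a single point or a partial line spread and so number at most $\smax_2(6,4;2)=21$, and dually those of dimension in $\{4,5\}$ number at most $21$, whence the total is at most $42<77$ (the degenerate sub-cases built around the $0$- or $6$-space are even smaller). It therefore remains to treat codes $\mathcal{C}$ containing at least one plane together with at least one codeword of dimension in $\{1,2,4,5\}$.

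The heart of the argument applies Lemma~\ref{lemma_point_a_q_6_4_3} to the plane subcode $\mathcal{C}_3$. If $\mathcal{C}$ contains a point $P_0$ or a line $L_0$, then no plane of $\mathcal{C}$ passes through $P_0$ (respectively through any of the three points of $L_0$), so $r(P)=0$ for some point and the Lemma gives $\#\mathcal{C}_3\le 72$; dually a $5$-space or a solid forces $r(H)=0$ for some hyperplane and again $\#\mathcal{C}_3\le 72$. I would then bound the number of low- and high-dimensional codewords by a double count. Let $N$ be the set of points lying on no plane of $\mathcal{C}_3$; every line codeword lies inside $N$, so their number is at most $\lvert N\rvert/3$, while counting point-plane incidences gives $7\cdot\#\mathcal{C}_3=\sum_P r(P)\le 9\bigl(63-\lvert N\rvert\bigr)$ because each point off $N$ lies on at most $\smax_2(5,4;2)=9$ codewords. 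The same estimate holds dually with the set $N^\ast$ of hyperplanes containing no plane of $\mathcal{C}_3$, where one checks that each solid of $\mathcal{C}$ forces its three ambient hyperplanes into $N^\ast$ and that distinct solids yield disjoint such triples. Combining $\#\mathcal{C}_3\le 72$ with $\lvert N\rvert,\lvert N^\ast\rvert\le 63-\tfrac{7}{9}\#\mathcal{C}_3$ and the spread bounds on the low and high parts, the worst case (lines together with solids) gives $\#\mathcal{C}\le\#\mathcal{C}_3+\tfrac13\lvert N\rvert+\tfrac13\lvert N^\ast\rvert\le 76$, while the remaining sub-cases (a single point and/or a single $5$-space) yield at most $75$.

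The main obstacle is precisely this mixed estimate: the crude bounds $\#\mathcal{C}_3\le 72$ and ``low $\le 21$, high $\le 21$'' are far too weak in isolation, and the improvement to $76$ comes only from the observation that inserting a non-plane codeword makes certain points (or hyperplanes) unavailable to planes, which simultaneously caps $\#\mathcal{C}_3$ and, through the incidence count, caps how many non-planes can be packed into the few remaining uncovered points and hyperplanes. Once every mixed code is shown to have at most $76$ codewords, the equality $\smax_2(6,4)=\smax_2(6,4;3)=77$ follows, all optimal codes consist of planes only, and the count of $5$ isomorphism types is inherited from the constant-dimension classification in \cite{honold2015optimal}.
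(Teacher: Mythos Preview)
Your proposal is correct and follows essentially the same approach as the paper: both arguments hinge on Lemma~\ref{lemma_point_a_q_6_4_3} to force $\delta_3\le 72$ once a non-plane codeword is present, then finish with the double count $7\,\#\mathcal{C}_3=\sum_P r(P)\le 9\cdot(\text{points not blocked})$, and finally invoke the constant-dimension classification from \cite{honold2015optimal} for the five isomorphism types. The only organisational difference is that the paper uses orthogonality to assume $\delta_1+\delta_2\ge\delta_4+\delta_5$ and then splits into the sub-cases $\delta_2\le 2$ versus $\delta_2\ge 3$, whereas you treat the low and high sides symmetrically via the uncovered-point set $N$ and its dual $N^\ast$, which yields the uniform estimate $\#\mathcal{C}\le 42+\tfrac{13}{27}\,\#\mathcal{C}_3\le 76$ without a separate small-$\delta_2$ case.
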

\begin{proof}
  Let $\mathcal{C}$ be a subspace code in $\F_2^6$ with maximum cardinality for minimum subspace distance $d=4$. 
  From \cite{honold2015optimal} we know $\delta_3\le 77$. 
  Moreover, we have $\delta_0+\delta_1\le 1$, $\delta_6+\delta_5\le 1$, $21\delta_0+\delta_1+\delta_2\le \smax_2(6,4;2)=21$, 
  and $21\delta_6+\delta_5+\delta_4\le \smax_2(6,4;4)=\smax_2(6,4;2)=21$. If $\delta_0=1$, then $\delta_1=\delta_2=\delta_3=0$, so that 
  $\#\mathcal{C}\le 1+21<77$. Thus, $\delta_0=0$ and, by orthogonality, $\delta_6=0$. Again by orthogonality, we can assume 
  $\delta_1+\delta_2\ge \delta_4+\delta_5$. If $\delta_1=1$, then $\delta_2=0$ and Lemma~\ref{lemma_point_a_q_6_4_3} 
  gives $\delta_3\le 72$, so that $\#\mathcal{C}\le 1+72+1<77$. Similarly, if $\delta_2\ge 1$, then $\delta_1=0$ and 
  Lemma~\ref{lemma_point_a_q_6_4_3} implies $\delta_3\le 72$. If $1\le\delta_2\le 2$, then $\#\mathcal{C}\le 72+2\cdot 2<77$. 
  For the cases with $\delta_2\ge 3$ we observe that every point of $\F_2^6$ can be contained in at most $\smax_2(5,4;2)=9$ 
  planes in $\mathcal{C}$. Any two lines and each pair of a line and a plane in $\mathcal{C}$ have to be disjoint, so that 
  $\delta_3\le \left(63-3\delta_2\right)\cdot\frac{9}{7}$, since any plane contains $7$ points. Thus, $\#\mathcal{C}\le 2\delta_2+\delta_3
  \le  81 -\frac{13\delta_2}{7}\le 81 -\frac{39}{7}<77$. To sum up, $\#\mathcal{C}=77$ is only possible if all codewords are planes. 
  The corresponding five isomorphism types have been determined in~\cite{honold2015optimal}.  
\end{proof}

The case of minimum subspace distance $d=3$ is more involved and we can only present partial results. A bit more notation from 
the derivation of $\smax_2(6,4;3)=77$, see \cite{honold2015optimal}, is necessary. Let $\mathcal{C}_3$ be a constant-dimension 
code in $\F_2^6$ with codewords of dimension $3$ and minimum subspace distance $4$. A subset of $\mathcal{C}_3$ consisting of $9$ 
planes passing through a common point $P$ will be called a \emph{$9$-configuration}.
\begin{lemma}(\cite[Lemma 7]{honold2015optimal})
\label{lma:9conf}
If $\#\mathcal{C}_3 \geq 73$ then $\mathcal{C}_3$ contains a $9$-configuration.
\end{lemma}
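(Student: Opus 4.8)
The plan is to prove the contrapositive by a short double-counting argument on point--codeword incidences. First I would record the structural consequence of the distance condition: since every codeword of $\mathcal{C}_3$ is a plane and the minimum subspace distance is $4$, any two distinct codewords $X,Y$ satisfy $\sdist(X,Y)=6-2\dim(X\cap Y)\ge 4$, hence $\dim(X\cap Y)\le 1$. Thus two codewords meet in at most a point. Fixing a point $P$ and passing to the quotient $\F_2^6/P\cong\F_2^5$ sends the codewords through $P$ to a set of lines with pairwise trivial intersection, i.e.\ a constant-dimension code of minimum distance $4$; this is precisely the bound $r(P)\le q^3+1=9$ (for $q=2$) already established in Lemma~\ref{lemma_point_a_q_6_4_3}.

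Next I would set up the global count. Each codeword contains $\gauss{3}{1}{2}=7$ points and $\F_2^6$ has $\gauss{6}{1}{2}=63$ points, so counting incident pairs $(P,X)$ with $P\le X\in\mathcal{C}_3$ in two ways gives
$$\sum_{P\in\G{6}{1}{2}} r(P)=7\cdot\#\mathcal{C}_3.$$

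Finally I would argue by contradiction. Assume $\mathcal{C}_3$ contains no $9$-configuration. Since $r(P)\le 9$ for every point, the absence of a $9$-configuration means $r(P)\le 8$ for all $P$, and summing over the $63$ points yields $\sum_{P} r(P)\le 8\cdot 63=504$. On the other hand $\#\mathcal{C}_3\ge 73$ forces $\sum_P r(P)=7\cdot\#\mathcal{C}_3\ge 7\cdot 73=511>504$, a contradiction. Hence some point $P$ has $r(P)=9$, and the nine codewords through it constitute the desired $9$-configuration.

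Once the per-point bound $r(P)\le 9$ from Lemma~\ref{lemma_point_a_q_6_4_3} is in hand there is no genuine obstacle; the only delicate point is the numerology, namely that $7\cdot 73=511$ barely exceeds $8\cdot 63=504$, which is exactly what pins the threshold in the hypothesis at $73$.
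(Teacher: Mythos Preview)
Your argument is correct and is exactly the standard pigeonhole/double-counting proof: the paper does not supply its own proof here but merely cites \cite[Lemma~7]{honold2015optimal}, where the reasoning is the same---the incidence count $\sum_P r(P)=7\cdot\#\mathcal{C}_3\ge 511$ over the $63$ points forces some $r(P)\ge 9$.
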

There are four isomorphism types of $9$-configurations. A subset of $\mathcal{C}_3$ of size $17$ will be called a \emph{$17$-configuration} 
if it is the union of two $9$-con\-fi\-gu\-ra\-tions, i.e., a $17$-configuration corresponds to a pair of points $(P,P')$ of degree $9$ that 
is connected by a codeword in $\mathcal{C}_3$ containing $P$ and $P'$.
\begin{lemma}(\cite[Lemma 8]{honold2015optimal})
\label{lma:17conf}
If $\#\mathcal{C}_3 \geq 74$ then $\mathcal{C}_3$ contains a $17$-configuration.
\end{lemma}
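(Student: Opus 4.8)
The plan is to combine one global counting estimate with the local structure of a single $9$-configuration. First I would show that $\#\mathcal{C}_3\ge 74$ forces many points of maximal degree. By Lemma~\ref{lemma_point_a_q_6_4_3} every point $P$ lies on at most $\smax_2(5,4;2)=9$ codewords, i.e.\ $r(P)\le 9$. Counting incident point-plane pairs and using that every plane contains $\gauss{3}{1}{2}=7$ points gives $\sum_{P}r(P)=7\cdot\#\mathcal{C}_3$, the sum running over all $\gauss{6}{1}{2}=63$ points. If $n_9$ denotes the number of points of degree exactly $9$ and the remaining $63-n_9$ points contribute at most $8$ each, then $7\cdot\#\mathcal{C}_3\le 9n_9+8(63-n_9)=504+n_9$, whence $n_9\ge 7\cdot\#\mathcal{C}_3-504\ge 7\cdot 74-504=14$. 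Thus there are at least $14$ points of degree $9$.

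Next I would examine the planes through one degree-$9$ point $P$. Since $\mathcal{C}_3$ has minimum subspace distance $4$, any two of its planes meet in at most a point; as all nine planes through $P$ already share $P$, each such pair meets in exactly $P$. Hence their point sets overlap only in $P$, so these nine planes cover $P$ together with $9\cdot 6=54$ further, pairwise distinct points, leaving exactly $63-1-54=8$ points of $\F_2^6$ uncovered. The key observation is that every covered point $Q\neq P$ lies with $P$ on a common codeword, namely the unique plane of this $9$-configuration through $Q$.

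Now I would argue by contradiction. Suppose $\mathcal{C}_3$ contains no $17$-configuration, i.e.\ no codeword contains two points of degree $9$, and fix a degree-$9$ point $P$ as above. Any of the $54$ covered points $Q\neq P$ shares a codeword with $P$, so if $Q$ had degree $9$ that codeword would contain two degree-$9$ points, a contradiction; hence all $54$ covered points have degree at most $8$. Consequently every point of degree $9$ lies in $\{P\}$ together with the $8$ uncovered points, giving $n_9\le 9$, which contradicts $n_9\ge 14$. Therefore some codeword $\pi$ contains two points $P,P'$ of degree $9$. Finally $\pi$ is the unique codeword through both $P$ and $P'$: a second such codeword would share with $\pi$ the line $\langle P,P'\rangle$, again violating the distance bound. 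Hence the nine planes through $P$ and the nine through $P'$ overlap in exactly $\pi$, so their union has $9+9-1=17$ codewords and forms a $17$-configuration.

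I expect the only delicate points to be bookkeeping ones: verifying that the nine planes through $P$ are pairwise point-disjoint outside $P$ (so that the uncovered set really has size $8$), and confirming that the connecting plane is unique (so that the union has size exactly $17$ rather than fewer). Both follow at once from the fact that distance-$4$ planes meet in at most a point, which also makes the threshold $74$ sharp for this argument: at $\#\mathcal{C}_3=73$ the same count only yields $n_9\ge 7$, too few to beat the bound $n_9\le 9$.
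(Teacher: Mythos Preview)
Your argument is correct. The double count $\sum_P r(P)=7\cdot\#\mathcal{C}_3$ together with $r(P)\le 9$ gives $n_9\ge 7\cdot 74-504=14$; the nine planes of a $9$-configuration through $P$ cover $54$ points besides $P$ and leave only $8$ uncovered, so under the negation hypothesis all degree-$9$ points would have to fit into a set of size $9$, a contradiction. The uniqueness of the connecting plane then gives the $17$-count exactly.

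Note, however, that the present paper does not supply its own proof of this lemma: it is quoted verbatim as \cite[Lemma~8]{honold2015optimal}. Your argument is in fact the same double-counting proof given there (the $n_9\ge 14$ bound and the observation that a $9$-configuration leaves only $8$ holes), so there is nothing to compare beyond saying that you have reproduced the cited proof correctly. Your closing remark on sharpness is also in line with the original: at $\#\mathcal{C}_3=73$ one only gets $n_9\ge 7$, which is why Lemma~\ref{lma:9conf} stops at the weaker conclusion.
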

There are $12\,770$ isomorphism types of $17$-configurations, see \cite[Lemma 9]{honold2015optimal}.
\begin{proposition}
  \label{prop_a_2_6_3}
  $108\le \smax_2(6,3)\le 117$
\end{proposition}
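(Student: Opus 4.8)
The plan is to establish the two bounds by entirely different means: the upper bound by linear-programming duality applied to the refined ILP of Lemma~\ref{lemma_additional_inequalities}, and the lower bound by an explicit $G$-invariant construction obtained through the Kramer--Mesner device of prescribing automorphisms sketched in Section~\ref{subsec_opt_formulations}. As a common starting point I would first record the elementary per-dimension constraints. Since $d=3$ is odd, any two codewords of the \emph{same} dimension $k$ are forced to subspace distance at least $4$ (the distance is even), so the dimension-$k$ part of $\mathcal{C}$ is a constant-dimension code of minimum distance $4$. This gives $\delta_3\le\smax_2(6,4;3)=77$, $\delta_2\le\smax_2(6,4;2)=21$, and dually $\delta_4\le 21$; moreover two distinct points lie at distance $2$, whence $\delta_1\le 1$, and dually (two hyperplanes meet in codimension one) $\delta_5\le 1$, while trivially $\delta_0,\delta_6\le 1$. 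Summing yields only the crude estimate $M\le 1+1+21+77+21+1+1=123$, so all the content lies in exploiting interactions \emph{between} different dimensions.

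For the upper bound I would feed precisely these per-dimension bounds, together with the six cross-dimension inequalities from~(\ref{ie_add}) coming from $S=\{(2,4,5),(2,5,9),(3,1,9),(3,5,9),(4,1,9),(4,2,5)\}$, into the ILP of Lemma~\ref{lemma_additional_inequalities} and solve its LP relaxation. Each such inequality records a mixed-dimension incidence: for instance a solid chosen as a codeword forbids the up to $\smax_2(4,3;2)=5$ line codewords it could otherwise contain, and a line codeword caps the planes and solids meeting it too deeply. These are exactly the constraints that shave the crude $123$ downwards. Solving this LP---and, if its optimum is not already below $118$, tightening by a short branch-and-bound---certifies the integer bound $\smax_2(6,3)\le 117$.

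For the lower bound I would prescribe a suitable group $G\le\GL(6,\F_2)$ of automorphisms with favourable orbit lengths on the subspaces of $\F_2^6$---for example a cyclic subgroup sitting inside a Singer cycle of order $63$---and impose $x_U=x_{\varphi(U)}$ for all $\varphi\in G$. This collapses the $0/1$ variables into orbit indicators and, because $G$ permutes the constraints, collapses the constraint system into one representative per $G$-orbit, producing the small reduced ILP of the Kramer--Mesner scheme. Solving this reduced system yields a $G$-invariant subspace code of $108$ codewords; it then remains only to check on orbit representatives that every pair of codewords has subspace distance at least $3$, which the reduced constraints already guarantee, so $\smax_2(6,3)\ge 108$.

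The hard part is that these two certificates do not meet. The LP relaxation cannot be pushed below $117$ with the inequalities in $S$, while no prescribed automorphism group seems to admit more than $108$ invariant codewords. Closing the gap would require either genuinely sharper valid inequalities capturing the mixed-dimension incidences beyond the single-solid and single-line exclusions above, or an asymmetric construction that does not respect any large group of automorphisms; both look delicate, which is exactly why only the interval $108\le\smax_2(6,3)\le 117$ is asserted.
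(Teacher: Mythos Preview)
Your lower-bound strategy is essentially the paper's: prescribe automorphisms, collapse to orbit variables, and solve the reduced ILP. The paper, however, does not use a Singer subgroup but a very specific group of order~$9$ (a direct product of two copies of $\mathbb{Z}_3$), and in fact checks \emph{all} conjugacy classes of subgroups of $\GL(6,\F_2)$, finding that only this order-$9$ group and three of its subgroups of orders $2$ and $3$ can possibly admit a code of size $\ge 108$. So ``a cyclic subgroup sitting inside a Singer cycle'' is not guaranteed to work; you would need to identify the right group.

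The real gap is in the upper bound. The LP relaxation of the ILP from Proposition~\ref{prop_ILP1} together with the six cross-dimension inequalities of Lemma~\ref{lemma_additional_inequalities} does \emph{not} certify $117$, and neither does ``a short branch-and-bound''. The paper's route is substantially more involved: it first invokes the classification of the five optimal $(6,77,4;3)_2$ codes to conclude that $\delta_3=77$ forces $\#\mathcal{C}\le 95$, hence $\delta_3\le 76$; it then uses Lemma~\ref{lemma_point_a_q_6_4_3} to get $4\delta_1+\delta_3\le 76$ (and dually for $\delta_5$), which disposes of the cases $\delta_1=1$ or $\delta_5=1$. The crux, however, is that for $\delta_3\ge 74$ the plane part of the code must contain a \emph{$17$-configuration} (Lemma~\ref{lma:17conf}), and there are $12\,770$ isomorphism types of these. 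The paper prescribes each one in turn, sets $\delta_4=0$, changes the objective to $2\delta_2+\delta_3$ (exploiting $\#\mathcal{C}\le 2(\delta_2+\delta_3)+\delta_4$ by duality), and solves the resulting $12\,770$ ILPs---reportedly about half a year of CPU time---to reach $117$. None of this structural input (the $77$-classification, Lemma~\ref{lemma_point_a_q_6_4_3}, the $17$-configurations) is captured by the inequalities you list, so your proposed LP cannot reproduce the bound.
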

\begin{proof}
  Let $\mathcal{C}$ be a subspace code in $\F_2^6$ with minimum subspace distance $3$. We have $\delta_0,\delta_1,\delta_5,\delta_6\le 1$,  
  $21\delta_0+\delta_1+\delta_2\le 21$, 
  $\delta_3\le 77$, and $21\delta_6+\delta_5+\delta_4\le 21$. Based on the classification of the optimal $(6,77,4;3)_2$ codes 
  the bound $\#\mathcal{C}\le 95$ for $\delta_3=77$ was determined in \cite{honold2016constructions}, so that we can assume $\delta_3\le 76$, 
  which gives $\#\mathcal{C}\le 21+76+21=118$ in general. If $\delta_0=1$, then $\#\mathcal{C}\le 1+76+21=98$, so that we can directly 
  assume $\delta_0=0$ and $\delta_6=0$, due to orthogonality and the example of cardinality $104$ found in \cite{honold2016constructions}. 
  From Lemma~\ref{lemma_point_a_q_6_4_3} we conclude $\delta_3\le 72$ if $\delta_1=1$, 
  which can be written as $4\delta_1+\delta_3\le 76$. By orthogonality, we also have $4\delta_5+\delta_3\le 76$.
  
  Using these constraints together with the ILP displayed at the end of Subsection~\ref{subsec_opt_formulations} and a
  prescribed group 
  \begin{equation}
    \label{eq_group_9}
    \left\langle
    \left(\begin{smallmatrix}
      1 & 0 & 0 & 0 & 0 & 0\\
      0 & 1 & 0 & 0 & 0 & 0\\
      0 & 0 & 0 & 1 & 0 & 0\\
      0 & 0 & 1 & 1 & 0 & 0\\
      0 & 0 & 0 & 0 & 0 & 1\\
      0 & 0 & 0 & 0 & 1 & 1
    \end{smallmatrix}\right),
    \left(\begin{smallmatrix}
      0 & 1 & 0 & 0 & 0 & 0\\
      1 & 1 & 0 & 0 & 0 & 0\\
      0 & 0 & 1 & 0 & 1 & 0\\
      0 & 0 & 0 & 1 & 0 & 1\\
      0 & 0 & 1 & 0 & 0 & 0\\
      0 & 0 & 0 & 1 & 0 & 0
    \end{smallmatrix}\right)
    \right\rangle,  
  \end{equation} 
  which is a direct product of two cyclic groups of order $3$, solving the corresponding ILP to optimality (in $3$ seconds) 
  gives $\smax_2(6,3)\ge 108$. Moreover, with the prescribed automorphisms, there are exactly four isomorphism types of codes 
  of cardinality $108$. In all cases the full automorphism group is indeed the group in (\ref{eq_group_9}) and the dimension distribution 
  is $\delta=(0,0,18,72,18,0,0)$. Moreover, the group fixes exactly $3$ lines, which complement the $18$ lines of the code to a 
  line spread.  
  
  For the upper bound we also utilize an ILP formulation, of course without prescribed automorphisms. If $\delta_i=1$ for some $i\in\{1,5\}$, 
  then $\#\mathcal{C}\le 21+72+21=114$. If $\mathcal{C}$ does not contain a $17$-configuration, then $\#\mathcal{C}\le 21+73+21=115$. 
  So, we set $\delta_0=\delta_1=\delta_5=\delta_6=0$ and prescribe a $17$-configuration, i.e., we fix the corresponding $x_U$-variables to 
  one. Using these constraints together with the ILP displayed at the end of Subsection~\ref{subsec_opt_formulations} gives an ILP formulation. 
  In order to further reduce the complexity, we modify the target function to $2\delta_2+\delta_3$ and set $\delta_4=0$. Solving the 
  corresponding $12\,770$ ILPs gives $\smax_2(6,3)\le 117$, while some solutions with target value $117$, $\delta_2=21$, and $\delta_3=75$ 
  were found. The computations took roughly half a year CPU time in total. We remark that $\delta_3=77$ is achievable in only $393$ and 
  $\delta_3=76$ in only $1076+393$ out of the $12\,770$ cases.     
\end{proof}

We remark that by prescribing $17$-configurations one can try to classify all 
$(6,M,4;3)_2$ codes with $M\ge 74$. However, this 
problem is computationally demanding. First experiments have resulted in $491$ non-isomorphic inclusion-maximal codes of cardinality $75$ and 
$88$ non-isomorphic inclusion maximal codes of cardinality $76$, i.e., there is no extendability result for the 
$(6,77,4;3)_2$ codes. 
Out of the $12\,770$ $17$-configurations only $2218$ allow a code of cardinality at least $75$.

Since the possible automorphism groups in $\F_2^6$ are quite manageable, we have checked all of them along the lines of the first part of 
the proof of Proposition~\ref{prop_a_2_6_3}. Note that if a group $G$ of automorphisms allows only code sizes smaller than $108$, then 
the same is true for all overgroups, which significantly reduces the computational effort. Moreover, at most one subgroup for each conjugacy  
class needs to be considered. After several hours of ILP computations, besides 
the identity group and the group in (\ref{eq_group_9}), there remain only the following groups that may 
allow code sizes of cardinality at least $108$:
\begin{itemize}
\item
$\left\langle\left(\begin{smallmatrix}
1 & 1 & 0 & 0 & 0 & 0\\
0 & 1 & 0 & 0 & 0 & 0\\
0 & 0 & 1 & 1 & 0 & 0\\
0 & 0 & 0 & 1 & 0 & 0\\
0 & 0 & 0 & 0 & 1 & 1\\
0 & 0 & 0 & 0 & 0 & 1
\end{smallmatrix}\right)\right\rangle\simeq\mathbb{Z}_2$, $\#\mathcal{C}\le 116$ 
\item
$\left\langle\left(\begin{smallmatrix}
0 & 1 & 0 & 0 & 0 & 0\\
1 & 1 & 0 & 0 & 0 & 0\\
0 & 0 & 0 & 1 & 0 & 0\\
0 & 0 & 1 & 1 & 0 & 0\\
0 & 0 & 0 & 0 & 0 & 1\\
0 & 0 & 0 & 0 & 1 & 1
\end{smallmatrix}\right)\right\rangle\simeq\mathbb{Z}_3$, $\#\mathcal{C}\le 115$ 
\item
$\left\langle\left(\begin{smallmatrix}
1 & 0 & 0 & 0 & 0 & 0\\
0 & 1 & 0 & 0 & 0 & 0\\
0 & 0 & 0 & 1 & 0 & 0\\
0 & 0 & 1 & 1 & 0 & 0\\
0 & 0 & 0 & 0 & 0 & 1\\
0 & 0 & 0 & 0 & 1 & 1
\end{smallmatrix}\right)\right\rangle\simeq\mathbb{Z}_3$, $\#\mathcal{C}\le 116$ 
\end{itemize}
where $\mathbb{Z}_p$ denotes a cyclic group of order $p$. The two latter groups are two non-isomorphic subgroups 
of the group of order nine in (\ref{eq_group_9}) that permit a code of size $108$. 

From the ILP computations in the proof of Proposition~\ref{prop_a_2_6_3} we conclude:
\begin{corollary}
  $\smax_2(6,3;\{2,3\})=\smax_2(6,3;\{0,1,2,3\})=96$
\end{corollary}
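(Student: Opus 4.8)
The plan is to pin both quantities to the common value $96$ by a sandwich. Since $\{2,3\}\subseteq\{0,1,2,3\}$, the monotonicity $\smax_2(6,3;T)\le\smax_2(6,3;T')$ for $T\subseteq T'$ recorded in the introduction gives $\smax_2(6,3;\{2,3\})\le\smax_2(6,3;\{0,1,2,3\})$, so it suffices to establish the lower bound $\smax_2(6,3;\{2,3\})\ge 96$ and the upper bound $\smax_2(6,3;\{0,1,2,3\})\le 96$. For the lower bound I would simply point to the optimal solutions already exhibited in the proof of Proposition~\ref{prop_a_2_6_3}: those with $\delta_2=21$ and $\delta_3=75$ consist of $21$ lines and $75$ planes and nothing else, hence form a valid $(6,96,3;\{2,3\})_2$ code.

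For the upper bound I would first peel off the dimensions $0$ and $1$ from an arbitrary $(6,M,3;\{0,1,2,3\})_2$ code $\mathcal{C}$. If $\delta_0=1$ then $\sdist(\{0\},U)=\dim(U)$ forces $\delta_1=\delta_2=0$, so $M\le 1+\delta_3\le 78<96$; hence $\delta_0=0$. Two distinct points lie at subspace distance $2<3$, so $\delta_1\le 1$, and if $\delta_1=1$ with point $P$, then no codeword line or plane may pass through $P$ (distances $1$ and $2$, respectively), so $r(P)=0$ and Lemma~\ref{lemma_point_a_q_6_4_3} yields $\delta_3\le 72$, while the lines are pairwise disjoint, giving $\delta_2\le 21$; thus $M\le 1+21+72=94<96$. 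Consequently the extremal case has $\delta_0=\delta_1=0$, that is, $\mathcal{C}$ is already a $\{2,3\}$-code, and it remains to bound $\smax_2(6,3;\{2,3\})\le 96$.

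For a pure line--plane code the planes form a constant-dimension code whose minimum distance is $4$ (two planes at distance $\ge 3$ are automatically at distance $4$, the distance being even), so $\delta_3\le 77$, and the lines are pairwise disjoint, so $\delta_2\le 21$. If $\delta_3\le 73$ then $\delta_2+\delta_3\le 94<96$. Otherwise $\delta_3\ge 74$ and Lemma~\ref{lma:17conf} guarantees a $17$-configuration inside the plane part. Here I would reuse verbatim the computational framework of the upper-bound part of Proposition~\ref{prop_a_2_6_3}: run through the $12\,770$ isomorphism types of $17$-configurations, prescribe each one by fixing the corresponding plane variables to $1$ in the ILP of Subsection~\ref{subsec_opt_formulations} restricted to dimensions $\{2,3\}$, but now with objective $\delta_2+\delta_3$ in place of $2\delta_2+\delta_3$. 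Solving these instances to optimality should return $\delta_2+\delta_3\le 96$ in every case, which together with the $\delta_3\le 73$ case gives $\smax_2(6,3;\{2,3\})\le 96$ and closes the sandwich.

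The obstacle here is computational rather than structural. The two aggregate inequalities available for free, $2\delta_2+\delta_3\le 117$ (from the proof of Proposition~\ref{prop_a_2_6_3}) and $\delta_3\le 77$, only imply $\delta_2+\delta_3\le 97$, so no short arithmetic argument separates the truth $96$ from $97$. The sharp value genuinely requires solving the prescribed-$17$-configuration ILPs with the new objective, i.e.\ the same large and expensive family of instances already treated in Proposition~\ref{prop_a_2_6_3}; the only change is the linear objective optimized over an unchanged feasible region.
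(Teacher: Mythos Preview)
Your overall strategy is sound and the reductions eliminating $\delta_0$ and $\delta_1$ are correct, but the final paragraph contains a false claim: a short arithmetic argument \emph{does} close the gap between $96$ and $97$, and this is precisely what the paper means by ``from the ILP computations in the proof of Proposition~\ref{prop_a_2_6_3} we conclude''. No re-run with a new objective is needed.

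Combine three facts already present in that proof: (i) $\delta_2\le 21$ (the lines form a partial spread); (ii) the ILP bound $2\delta_2+\delta_3\le 117$ for $\{2,3\}$-codes containing a $17$-configuration; (iii) if $\delta_3=77$ then $\delta_0+\delta_1+\delta_2+\delta_3\le 88$ (from \cite{honold2016constructions}, restated in the remark following the corollary). Now split on $\delta_3$. If $\delta_3\le 75$ then (i) gives $\delta_2+\delta_3\le 21+75=96$. If $\delta_3=76$ a $17$-configuration exists by Lemma~\ref{lma:17conf}, and (ii) yields $2\delta_2\le 41$, hence $\delta_2\le 20$ and $\delta_2+\delta_3\le 96$. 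If $\delta_3=77$ then (iii) gives $\delta_2+\delta_3\le 88$. Thus $\smax_2(6,3;\{2,3\})\le 96$, and the lower bound is witnessed by the $(\delta_2,\delta_3)=(21,75)$ solutions already exhibited. Your peeling argument then carries the upper bound over to $T=\{0,1,2,3\}$.

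So your proposed ILP re-run would work, but it is unnecessary; the corollary really is an immediate arithmetic consequence of the existing bound $2\delta_2+\delta_3\le 117$ once the extreme case $\delta_3=77$ is handled separately. Your assertion that ``the sharp value genuinely requires solving the prescribed-$17$-configuration ILPs with the new objective'' is the one step that is incorrect.
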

We remark that $\delta_3=77$ implies $\delta_0+\delta_1+\delta_2+\delta_3\le 88$, see \cite{honold2016constructions}.

\section{Subspace codes in $\F_2^7$}
\label{sec_dim_7}

For binary subspace codes in $\F_2^7$ the only cases not treated in Section~\ref{sec_dim_small} are minimum subspace distance $d=3$ and $d=4$. 
The upper bound $\smax_2(7,4)\le 407$ was obtained in \cite[Subsection 4.1]{honold2016constructions} using a quite involved and tailored analysis. 
The lower bound $\smax_2(7,4)\ge 334$ can be obtained taking $333$ planes in $\F_2^7$ with minimum subspace distance $4$, 
see \cite{heinlein2017subspace}, and adding the full ambient space $\F_2^7$ as a codeword. Here also the constant-dimension case is widely open, 
i.e, the best know bounds are $333\le\smax_2(7,4;3)\le 381$. If cardinality $381$ can be attained, then the code can have at most 
two automorphisms, see \cite{kiermaier2018order}. The upper bound $\smax_2(7,3)\le 776$ was obtained in \cite{bachoc2013bounds} using 
semidefinite programming. A construction for $\smax_2(7,3)\ge 584$ was described in \cite{etzion2013codes}. Based on extending 
a constant-dimension code of $329$ planes in $\F_2^7$ with minimum subspace distance~$4$ from \cite{honold2016putative}, the lower bound 
was improved to $\smax_2(7,3)\ge 593$ in \cite{honold2016constructions}.  
We remark that the group of order $16$ from \cite{heinlein2017subspace} gives a mixed dimension code of cardinality $574$ and ILP computation 
verifies that the group permits no subspace code of cardinality larger than $611$. Another construction of $329$  
planes with minimum subspace distance $4$ is stated in \cite{braun2014q} and based on the prescription of a cyclic group of order $15$ as 
automorphisms. Taking the same group and searching for planes and solids with subspace distance $3$ by an ILP formulation, we found a code 
of size $612$ consisting of $306$ planes and $306$ solids. Adding the empty and the full ambient space as codewords gives $\smax_2(7,3)\ge 614$.  
The ILP upper bound for subspace codes permitting this group is $713$ aborting after two days.

 
\section{Subspace codes in $\F_2^8$}
\label{sec_dim_8}

For binary subspace codes in $\F_2^8$ the cases not treated in Section~\ref{sec_dim_small} are given by minimum subspace distance 
$d=3$, $d=4$, $d=5$, and $d=6$. Here we will present some improvements. However, our results will only be partial in most cases. 
Even for constant-dimension codes the bounds $4801\le \smax_2(8,4;4)\le 6477$ is the current state of the art. The lower bound 
was found in \cite{braun2016new}. Extended by the empty and the full ambient space as codewords, this gives $\smax_2(8,4)\ge 4803$.
For $d=3$, the so-called Echelon-Ferrers construction, see \cite{etzion2009error}, gives $\smax_2(8,3)\ge 4907$, which was the 
previously best known lower bound. Using an integer linear programming formulation we found $857$ planes and $29$ $5$-subspaces that 
are compatible with the $4801$ solids with respect to subspace distance $d=3$, so that $\smax_2(8,3)\ge 5687$. We remark that the 
corresponding distance distribution is given by $3^{92028}, 4^{1333070}, 5^{1462022}, 6^{6719747}, 7^{2699636}, 8^{3861638}$. The 
upper bound $\smax_2(8,3)\le 9268$, see \cite{bachoc2013bounds}, of course is also valid for $\smax_2(8,4)$. However, for $d=4$ 
the upper bound can be significantly improved.

\begin{proposition}
  $\smax_2(8,4)\le 6479$
\end{proposition}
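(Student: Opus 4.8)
The plan is to peel off the two trivial codewords first and then reduce everything to the constant-dimension bound $\smax_2(8,4;4)\le 6477$. Since the only subspaces of $\F_2^8$ of dimension $0$ or $8$ are $\{0\}$ and $\F_2^8$, which lie at subspace distance $8$ from one another, we have $\smax_2(8,4;\{0,8\})=2$. Applying the subadditivity $\smax_q(v,d;T\cup T')\le\smax_q(v,d;T)+\smax_q(v,d;T')$ with $T=\{0,8\}$ and $T'=\{1,2,\dots,7\}$ gives $\smax_2(8,4)\le 2+\smax_2(8,4;\{1,2,\dots,7\})$. Hence the proposition follows once I show $\smax_2(8,4;\{1,\dots,7\})\le 6477$, i.e.\ that restricting to proper nontrivial subspaces cannot beat the solid bound.

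The temptation is to split $\{1,\dots,7\}$ further by subadditivity, or to replace every non-solid codeword by an incident solid, but both fail: a single further split already loses at least the $\smax_2(8,4;2)=85$ coming from a line spread, and extending a plane (or contracting a $5$-space) to a solid need not preserve the minimum distance. So the bound $\smax_2(8,4;\{1,\dots,7\})\le 6477$ must genuinely exploit that low- and high-dimensional codewords conflict with solids. I would first normalise using the duality automorphism $\pi$ (which swaps $\delta_k$ and $\delta_{8-k}$ and fixes $\delta_4$), record the cheap facts $\delta_1,\delta_7\le 1$ and $\delta_2,\delta_6\le 85$, and note the incompatibilities that force structure: a point and a line always lie within distance $3$, a point lies at distance $<4$ from every line/plane of the code, and a plane meets each solid codeword in at most a point.

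For the core inequality I would run a charging/averaging argument in the style of Lemma~\ref{lemma_point_a_q_6_4_3}: for each point $P$ bound the number of codewords through $P$, and for each hyperplane $H$ the number of codewords contained in $H$, using that subspaces through a fixed point (respectively, inside a fixed hyperplane) map distance-preservingly to the residual space $\F_2^7$, so the relevant local bounds are the constant-dimension values $\smax_2(7,4;k)$. Summing these local counts over all points and hyperplanes, and combining them with the incidence inequalities of the form~(\ref{ie_add}) — which tie the number of $k$-dimensional codewords incident with a fixed solid to $\smax_2(4,4;k)$ and its dual — should let me express $\delta_1+\dots+\delta_7$ as a solid count plus nonpositive corrections, yielding $\le\smax_2(8,4;4)\le 6477$. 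The dual half (dimensions $5,6,7$) is obtained by applying $\pi$ to the low half.

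The main obstacle is exactly this last conversion. A non-solid codeword forbids many solids, but the forbidden ones need not belong to an optimal solid configuration, so there is no one-for-one exchange; the charging has to be carried out globally (in effect, fractionally) and made to come out at \emph{most} the solid bound rather than strictly below it. Controlling this slack — so that the proper-dimension code is bounded by $6477$ and not merely by $6477$ plus a positive error — is where the real work lies, and it is the reason the argument cannot be reduced to the subadditivity step alone.
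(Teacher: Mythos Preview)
Your overall strategy matches the paper's: reduce to the case without the trivial subspaces, use duality to assume the low-dimensional half dominates, and then bound $2(\delta_1+\delta_2+\delta_3)+\delta_4$ by an averaging argument over points. But you explicitly leave the key inequality $\smax_2(8,4;\{1,\dots,7\})\le 6477$ unproven, calling it ``where the real work lies''---so the proposal is an outline, not a proof.

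The paper closes this gap with a concrete score. For each point $P$ let $\Delta_i=\Delta_i(P)$ count the codewords of dimension $i\le 4$ through $P$, and set
\[
  s(P)=\frac{\Delta_4}{15}+\frac{2\Delta_3}{7}+\frac{2\Delta_2}{3}+2\Delta_1,
\]
so that $\sum_P s(P)=\delta_4+2\delta_3+2\delta_2+2\delta_1\ge\#\mathcal{C}$ once $\delta_0=0$ and the duality normalisation is in force. The case analysis in the quotient $\F_2^8/P\cong\F_2^7$ then gives: if $\Delta_1=1$ then all other $\Delta_i$ vanish and $s(P)=2$; if $\Delta_2=1$ then $\Delta_3=0$ and a line-counting argument (with a parity obstruction ruling out $372$) yields $\Delta_4\le 371$, hence $s(P)\le\tfrac{2}{3}+\tfrac{371}{15}=\tfrac{381}{15}$; if $\Delta_1=\Delta_2=0$ then a point-covering count gives $\Delta_4\le 381-9\Delta_3$, whence $s(P)\le\tfrac{381}{15}-\tfrac{11\Delta_3}{35}\le\tfrac{381}{15}$. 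Summing over the $255$ points gives $\#\mathcal{C}\le 255\cdot\tfrac{381}{15}=6477$. Your proposal never specifies these weights, nor the local inequalities $\Delta_4\le 371$ and $\Delta_4\le 381-9\Delta_3$; without them the averaging has nothing to average. The hyperplane side you mention is not needed---duality at the outset already folds it into the point count.
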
  
\begin{proof}
  Let $\mathcal{C}$ be a subspace code in $\F_2^8$ with maximum 
  cardinality for minimum subspace distance $d=4$. 
  Setting $\delta_i=\delta_i(\mathcal{C})$ for $0\le i\le 8$, we observe $\delta_0,\delta_8\le 1$ and $\delta_4\le\smax_2(8,4;4)\le 6477$. 
  Due to orthogonality we assume $\delta_0+\delta_1+\delta_2+\delta_3\ge\delta_5+\delta_6+\delta_7+\delta_8$. If $\delta_0=1$, then 
  $\delta_1=\delta_2=\delta_3=0$ and $\#\mathcal{C}\le 2\cdot\left(\delta_0+\delta_1+\delta_2+\delta_3\right)+\delta_4\le 2+6477=6479$. 
  
  In the following we assume that $\mathcal{C}$ does not contain the empty space as a codeword. By $\mathcal{C}'$ we denote 
  the codewords of $\mathcal{C}$ that have a dimension of at most $4$, i.e., $\#\mathcal{C}'=\delta_1+\delta_2+\delta_3+\delta_4$ 
  and $\#\mathcal{C}\le 2\cdot\left(\delta_1+\delta_2+\delta_3\right)+\delta_4$. For each point $P$ in $\F_2^8$ we denote the set of 
  codewords from $\mathcal{C}'$ that contain $P$ by $\mathcal{C}'_P$. As abbreviation we use $\Delta_i=\delta_i(\mathcal{C}'_P)$ for 
  $1\le i\le 4$. If $\Delta_1\ge 1$, then $\Delta_1=1$ and $\Delta_2=\Delta_3=\Delta_4=0$. If $\Delta_2\ge 1$, then $\Delta_2=1$ and 
  $\Delta_3=0$. Since $\F_2^8/P\cong \F_2^7$ modding out $P$ from the codewords of $\mathcal{C}'_P$ gives a point $Q\le\F_2^7$, that 
  corresponds to the unique line in $\mathcal{C}'_P$, and $\Delta_4$ planes with minimum subspace distance $4$. Every line containing $Q$ 
  cannot be contained in a $3$-dimensional codeword. Since each plane contains $\gauss{3}{2}{2}=7$ lines, there are $\gauss{7}{2}{2}$ lines 
  in total, and $\gauss{6}{1}{2}$ lines containing $Q$, we have $\Delta_4\le \left(\gauss{7}{2}{2}-\gauss{6}{1}{2}\right)/7=372$. If $\Delta_4=372$, 
  then in any hyperplane $H$ of $\F_2^7$ not containing $Q$ the $\gauss{6}{2}{2}=651$ lines are covered by the, say, $x$ $3$-dimensional 
  codewords contained in $H$ and the $372-x$ lines of $3$-dimensional codewords not contained in $H$. Thus, $651=7\cdot x+1\cdot(372-x)
  =372+6x$, which has no integer solution, so that $\Delta_4\le 371$. In the remaining cases we have $\Delta_1=\Delta_2=0$. 
  Again we mod out $P$ and consider $\Delta_3$ lines and $\Delta_4$ planes in $\F_2^7$ with minimum subspace distance $d=4$. Since every 
  two lines and each pair of a line and a plane are disjoint we consider how the $127$ points of $\F_2^7$ are covered by the codewords. 
  Since any point can be contained in at most $21$ planes, we have $\Delta_4\le \left(127-3\Delta_3\right)\cdot\frac{21}{7}=381-9\Delta_3$.
  
  Summing $\Delta_i(P)$ over all points $P$ of $\F_2^8$ gives $\gauss{i}{1}{2}\cdot \delta_i$ for all $1\le i\le 4$, so that we consider 
  a \textit{score} 
  $$
    s(P)=\Delta_4(P)/\gauss{4}{1}{2} + 2\cdot \sum_{i=1}^3 \Delta_i(P)/\gauss{i}{1}{2}.
  $$     
  Summing over the scores of all points then gives an upper bound for $\#\mathcal{C}$ due to 
  $\#\mathcal{C}\le\delta_4+2\cdot\left(\delta_1+\delta_2+\delta_3\right)$. If $\Delta_1(P)=1$, then $s(P)=2$. If $\Delta_2(P)=1$, then 
  $s(P)\le \frac{2}{3}+\frac{371}{15}=\frac{381}{15}$. If $\Delta_3(P)\ge 1$, then 
  $$
    s(P)\le \frac{2\Delta_3(P)}{7}+\frac{381-9\Delta_3(P)}{15}=\frac{381}{15}-\frac{11\Delta_3(P)}{35}\le \frac{381}{15}.
  $$
  Since all scores are at most $\frac{381}{15}$, we have $\#\mathcal{C}\le 255\cdot \frac{381}{15}=6477<6479$.
\end{proof}       

\begin{proposition}
  \label{prop_a_2_8_5}
  $263\le \smax_2(8,5)\le 326$
\end{proposition}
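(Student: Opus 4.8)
The plan is to sandwich $\smax_2(8,5)$ between an explicit construction and a counting-plus-ILP argument, exploiting the duality automorphism $\pi$ and the subadditivity $\smax_2(v,d;T\cup T')\le\smax_2(v,d;T)+\smax_2(v,d;T')$ recalled in Section~\ref{sec_preliminaries}. The key preliminary observation is that, since the subspace distance between two subspaces of equal dimension is always even, a minimum distance of $5$ coincides with a minimum distance of $6$ on each constant-dimension layer; hence every $\delta_k=\delta_k(\mathcal{C})$ is controlled by the corresponding constant-dimension number for $d=6$. Concretely $\delta_4\le\smax_2(8,6;4)=257$, while $\delta_3,\delta_5\le\smax_2(8,6;3)=34$ (a partial plane-spread number), and $\delta_2,\delta_6\le 1$ because two distinct $2$-subspaces, or two distinct $6$-subspaces, always lie at distance at most $4$.

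For the upper bound I would first strip off the extreme dimensions. If $\delta_0=1$ then every other codeword has dimension at least $5$, forcing $\#\mathcal{C}\le 1+\delta_5+\delta_6+\delta_7+\delta_8$, far below the target; the dual argument via $\pi$ handles $\delta_8$. Likewise a codeword of dimension $1$ is incompatible with any codeword of dimension $2$ or $3$ (a point and a plane satisfy $\sdist\le 4$) and forces all remaining codewords into dimension at least $4$, again yielding a small total; the dual statement disposes of $\delta_7$. Thus an optimal code has all codewords in dimensions $\{2,3,4,5,6\}$ with $\delta_2,\delta_6\le 1$, and the subadditive estimate reads $\#\mathcal{C}\le 1+\smax_2(8,6;3)+257+\smax_2(8,6;3)+1=327$. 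The entire difficulty is therefore concentrated in shaving off a single unit: one must show that the extremal configuration with $\delta_3=\delta_5=34$ and $\delta_4=257$ cannot occur, equivalently $\smax_2(8,5;\{3,4,5\})\le 324$. I would attack this through the ILP of Proposition~\ref{prop_ILP1} augmented by the additional inequalities of Lemma~\ref{lemma_additional_inequalities} instantiated with the above layer bounds, together with the incidence constraints coming from $\smax_2(7,6;3)=17$ (bounding the number of solids through a fixed point, after modding out the point) and from $\dim(\pi\cap S)\le 1$ for a plane $\pi$ and solid $S$ in the code; solving this to optimality is where the computational weight lies.

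For the lower bound I would produce a duality-symmetric $(8,263,5)_2$ code. The natural route starts from an optimal constant-dimension code of $257$ solids realising $\smax_2(8,6;4)=257$ and adjoins a small number of planes together with, by applying $\pi$, the same number of $5$-subspaces, each added plane being disjoint from every other plane and meeting every solid in at most a point, and dually for the $5$-subspaces. A symmetric choice of three planes and three $5$-subspaces then yields dimension distribution $\delta=(0,0,0,3,257,3,0,0,0)$ and cardinality $263$. Equivalently, and in line with the rest of the paper, one prescribes a suitable group $G\le\GL(8,\F_2)$ of automorphisms, reduces the ILP of Section~\ref{sec_preliminaries} by the orbit equations $x_U=x_{\varphi(U)}$, and solves it to obtain such a code directly.

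The main obstacle is the final upper-bound step: the clean subadditive and incidence counting reaches only $327$, and closing the last unit to $326$ requires either the classification data of the optimal $257$-solid codes (to certify that once one side is saturated, at most $33$ rather than $34$ further codewords fit on the other side) or a dedicated ILP computation establishing $\smax_2(8,5;\{3,4,5\})\le 324$. Justifying this single exclusion, rather than any of the routine layer bounds, is the crux of the proof.
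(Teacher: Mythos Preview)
Your overall strategy is sound and would succeed, but both halves differ from the paper's execution.

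For the upper bound you correctly obtain $327$ and correctly locate the crux in the classification of the two $(8,257,6;4)_2$ codes. The paper, however, does not run an ILP on $\smax_2(8,5;\{3,4,5\})$, nor does it argue that merely ``at most $33$'' planes fit once the solids are saturated. It case-splits on $\delta_4$: if $\delta_4\le 256$ the subadditive bound already gives $1+34+256+34+1=326$; if $\delta_4=257$, the classification shows that after removing one suitable solid the remaining $256$ form a lifted MRD code disjoint from a fixed solid $F$, and then every line of $\F_2^8$ disjoint from $F$ lies in exactly one of these $256$ codewords. This covering property, together with $\smax_2(4,4;2)=5$, forces $\delta_2+\delta_3\le 5$ and by duality $\delta_5+\delta_6\le 5$, whence $\#\mathcal{C}\le 267$, far below $326$. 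So the paper's upper bound is entirely structural and computation-free; your weaker ``$33$'' would suffice, but the actual obstruction is dramatically stronger.

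For the lower bound the paper does not extend an optimal $257$-solid code symmetrically. It gives an explicit Echelon--Ferrers construction with pivot vectors $(1,1,1,1,0,0,0,0)$, $(0,1,0,0,1,1,0,0)$, $(1,0,1,0,1,0,1,1)$, $(0,0,0,1,0,1,1,1)$ and Ferrers-diagram rank-metric codes of sizes $256,4,2,1$, yielding $263$ directly with dimension distribution $\delta_3=4$, $\delta_4=257$, $\delta_5=2$. Your proposed code with $\delta=(0,0,0,3,257,3,0,0,0)$ does exist---the paper mentions it in a remark following the proof---but it is obtained by ILP rather than exhibited explicitly.
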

\begin{proof}
  We have $\smax_2(8,5;\{0,1,2\})=\smax_2(8,5;\{6,7,8\})\le 1$, $\smax_2(8,5;3)=\smax_2(8,5;5)=\smax_2(8,6;3)=34$, and 
  $\smax_2(8,5;4)=\smax_2(8,6;4)\le 257$, so that $A_2(8,5)\le 1+34+257+34+1=327$. Next, we use the classification of 
  the optimal codes attaining $A_2(8,5;4)=A_2(8,6;4)=257$. For both types 
  removing a suitable codeword gives a $4$-dimensional subspace $F$ that has empty intersection with all $256$ remaining codewords, i.e., 
  which is a lifted MRD code. Assume that $\mathcal{C}$ contains such a lifted MRD code. All points that are not contained in $F$ are contained 
  in exactly $16$ codewords and every line in $\F_2^8$  with empty intersection with $F$ is contained in exactly one codeword. Since planes 
  and solids can share at most one point, a $2$-dimensional codeword has to be disjoint to the $4$-dimensional codewords, and 
  $\smax_2(4,4;2)=5$, we have $\delta_2(\mathcal{C})+\delta_3(\mathcal{C})\le 5$, $\delta_5(\mathcal{C})+\delta_6(\mathcal{C})\le 5$, and 
  $\delta_4(\mathcal{C})\le 257$, so that $\#\mathcal{C}\le 267$. Otherwise at most $256$ solids can be contained in the code, which gives 
  the desired upper bound.
  
  For the lower bound we use the Echelon-Ferrers construction, see \cite{etzion2009error}, with pivot vectors 
  $(1,1,1,1,0,0,0,0)$, $(0,1,0,0,1,1,0,0)$, $(1,0,1,0,1,0,1,1)$, $(0,0,0,1,0,1,1,1)$. Corresponding rank distance codes 
  of cardinalities $256$, $4$, $2$, and $1$, respectively, can be constructed using the methods from \cite{etzion2016optimal}.
\end{proof}

Prescribing the lifted MRD code gives an upper bound of $263$ via an ILP computation. There is also a code of 
cardinality $263$ with dimension distribution $\delta=(0,0,0,3,257,3,0,0,0)$.


We continue with subspace distance $d=6$. 
\cite[Theorem 3.3(i)]{honold2016constructions} implies $\smax_2(8,6)=\smax_2(8,6;4)$, where the latter was 
determined to $\smax_2(8,6;4)=257$ in \cite{heinlein2017classifying}. For the constant-dimension case all 
two isomorphism types were classified in \cite[Theorem 1]{heinlein2017classifying}. An essential building block 
is the lifted Gabidulin code $\mathcal{G}_{8,4,3}$, which corresponds to the unique $4\times 4$ MRD code with rank distance $3$ over 
$\mathbb{F}_2$, see \cite[Theorem 10]{heinlein2017classifying}. It can be extended by a solid in exactly 
two non-isomorphic ways. By combining the approaches of both papers we can classify the isomorphism types in the mixed-dimension 
case:

\begin{theorem}
  \label{thm_2_8_6_classification}
  There are exactly eight non-isomorphic subspace codes attaining the maximum cardinality $\smax_2(8,6)=257$. 
  The eight isomorphism types are given by $\gabidulin_{8,4,3}$ extended by a codeword $U$ with 
  $\tau(U)$ given by
  \begin{itemize}
    \item $\left(\begin{smallmatrix}
          0&0&0&0&1&0&1&0\\
          0&0&0&0&0&1&0&1
          \end{smallmatrix}\right)
          $,
    \quad 
    $\left(\begin{smallmatrix}
          0&0&0&0&0&0&1&0\\
          0&0&0&0&0&0&0&1
          \end{smallmatrix}\right)$,\\[1mm]       
    \item $\left(\begin{smallmatrix}
          0&0&0&0&0&1&0&0\\
          0&0&0&0&0&0&1&0\\
          0&0&0&0&0&0&0&1
          \end{smallmatrix}\right)$,\\[1mm]
    \item $\left(\begin{smallmatrix}
          0&0&0&0&1&0&0&0\\
          0&0&0&0&0&1&0&0\\
          0&0&0&0&0&0&1&0\\
          0&0&0&0&0&0&0&1
          \end{smallmatrix}\right)$, 
    \quad
    $\left(\begin{smallmatrix}
          0&0&0&1&0&0&0&0\\
          0&0&0&0&1&0&0&0\\
          0&0&0&0&0&1&0&0\\
          0&0&0&0&0&0&1&0
          \end{smallmatrix}\right)$,\\[1mm]
    \item $\left(\begin{smallmatrix}
          0&0&0&1&0&0&0&0\\
          0&0&0&0&1&0&0&0\\
          0&0&0&0&0&1&0&0\\
          0&0&0&0&0&0&1&0\\
          0&0&0&0&0&0&0&1
          \end{smallmatrix}\right)$,\\[1mm]
    \item $\left(\begin{smallmatrix}
          0&0&1&0&0&0&0&0\\
          0&0&0&1&0&0&0&0\\
          0&0&0&0&1&0&0&0\\
          0&0&0&0&0&1&0&0\\
          0&0&0&0&0&0&1&0\\
          0&0&0&0&0&0&0&1
          \end{smallmatrix}\right)$, or 
    $\left(\begin{smallmatrix}
          1&0&0&1&0&0&0&0\\
          0&1&0&1&0&0&0&0\\
          0&0&0&0&1&0&0&0\\
          0&0&0&0&0&1&0&0\\
          0&0&0&0&0&0&1&0\\
          0&0&0&0&0&0&0&1
          \end{smallmatrix}\right)
          $.
  \end{itemize} 
\end{theorem}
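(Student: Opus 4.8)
The plan is to reduce the mixed-dimension problem to the already classified constant-dimension case, enlarged by a single controlled extension step. First I would constrain the dimension distribution of an optimal code $\mathcal{C}$ by elementary distance inequalities. Two distinct points, and dually two distinct hyperplanes, have subspace distance $2$, so $\delta_1,\delta_7\le 1$; two lines, and dually two $6$-spaces, have distance at most $4$, so $\delta_2,\delta_6\le 1$; and a line and a plane have distance at most $5$ (dually a $5$- and a $6$-space), so they cannot coexist. Since, by the partial-spread bound, dimensions $3$ and $5$ each contribute far fewer than $257$ pairwise compatible codewords, any size-$257$ code must contain a solid, whence $\sdist(P,S)\le 5$ for a point $P$ and a solid $S$, together with $\sdist(\{0\},S)=4$, gives $\delta_0=\delta_1=0$ and, dually, $\delta_7=\delta_8=0$. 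The more delicate part of this first phase is to show that an optimal code carries \emph{exactly one} non-solid codeword, so that $\mathcal{C}$ consists either of $257$ solids or of $256$ solids together with one codeword of dimension in $\{2,3,5,6\}$. For this I would combine a covering estimate—modulo any fixed point the solids through it form a partial plane-spread of $\F_2^7$, so each point lies on at most $\smax_2(7,6;3)=17$ solids and $256$ solids leave at most $29$ points uncovered—with the ILP of Lemma~\ref{lemma_additional_inequalities}, which discharges the short remaining case analysis excluding two or more non-solid codewords at size $257$.

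The heart of the matter is to prove that the solid part of $\mathcal{C}$ is always a lifted Gabidulin code. Using the duality automorphism $\pi$ from Theorem~\ref{thm:autos}, which is an isometry and hence maps optimal codes to optimal codes, I may assume the extra codeword $U$ has dimension $2$, $3$, or $4$. For $\dim U=4$ the code is constant-dimensional and I invoke the classification of the two optimal $(8,257,6;4)_2$ types from \cite{heinlein2017classifying}: each is $\gabidulin_{8,4,3}$ together with one further solid, giving the two solid extensions in the list. For $\dim U\in\{2,3\}$ the distance condition forces $U$ to be disjoint from every solid, so the $256$ solids form a distance-$6$ constant-dimension code whose uncovered points contain a line, respectively a plane. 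The main obstacle is to upgrade this to a disjoint \emph{solid} $S$: once such a hole is available, every codeword is a lifting $\tau^{-1}(I_4\mid M)$, the distance-$6$ condition becomes rank distance $3$ on the $256=2^{4(4-3+1)}$ matrices $M$, and the resulting MRD code is unique by \cite[Theorem 10]{heinlein2017classifying} and equals $\gabidulin_{8,4,3}$. I would produce the hole either by passing to the quotient $\F_2^8/U$ and analysing the induced family of $4$-subspaces, or by using the structural results of \cite{heinlein2017classifying} on large solid codes to rule out every non-lifted core; that such non-lifted cores genuinely exist but are irrelevant is illustrated by $255$ Gabidulin solids together with their hole, a $256$-solid code whose point set is completely covered and which therefore admits no disjoint line or plane, and dually no $5$- or $6$-dimensional extension, hence no non-solid extension at all.

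Once it is known that $\mathcal{C}=\gabidulin_{8,4,3}\cup\{U\}$, the classification is a finite orbit computation. I would determine the stabiliser $G=\Aut(\gabidulin_{8,4,3})\le\GL(8,\F_2)$, which is available from \cite{heinlein2017classifying}, and for each admissible dimension enumerate the subspaces $U$ with $\sdist(U,S)\ge 6$ for all $S\in\gabidulin_{8,4,3}$: this means $U$ disjoint from every solid when $\dim U\in\{2,3\}$, meeting every solid in at most a point when $\dim U\in\{4,5\}$, and in at most a line when $\dim U=6$. Since the dual of $\gabidulin_{8,4,3}$ is again a lifted Gabidulin code, I use $\pi$ to pair the cases $2\leftrightarrow 6$ and $3\leftrightarrow 5$, so that only dimensions $2,3,4$ need be computed directly. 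Splitting the admissible subspaces into $G$-orbits with Feulner's package \cite{feulner2009automorphism} then yields exactly the eight orbits whose representatives $\tau(U)$ appear in the statement; that these eight are pairwise non-isomorphic under $\GL(8,\F_2)$ follows from their dimension distributions, except within a common dimension, where the orbit computation itself certifies the distinction.
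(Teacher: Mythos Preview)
Your overall architecture matches the paper's: constrain the dimension distribution, reduce to $256$ solids plus at most one extra codeword, show the solids are the lifted Gabidulin code, then enumerate orbits. The difficulty is that the two middle steps are precisely where the work lies, and your proposal does not supply enough to carry them. For the first phase the paper does \emph{not} appeal to a generic ILP of the type in Lemma~\ref{lemma_additional_inequalities}; the full ILP in $\F_2^8$ is far too large to solve directly. Instead it uses explicit double-counting: for $\delta_3\ge 2$ one counts the $6$-subspaces meeting two fixed disjoint planes each in a point and shows every solid lies in at least $21$ of them, forcing $\delta_4\le 224$; similar arguments with $6$-subspaces disjoint from a line (or meeting a plane in a point) handle $\delta_2=1$ and $\delta_3=1$ and simultaneously rule out any codeword of dimension $5$ or $6$. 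Your covering estimate is one ingredient, but it is not enough on its own, and ``the ILP discharges the rest'' is not a plan that will terminate.

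The decisive gap is your step from a disjoint line or plane $U$ to a disjoint \emph{solid} $S$. The quotient idea does not obviously work: modding out a disjoint line sends the $256$ solids to $4$-subspaces of $\F_2^6$, but the intersection condition does not transfer to anything that singles out a solid hole upstairs, and for a disjoint plane the images are hyperplanes of $\F_2^5$ with massive repetition. The paper establishes the hole by a substantial, targeted computation: the presence of a disjoint line or plane forces every point and every hyperplane to be incident with at most $16$ solids (using the hole structure of maximal partial plane spreads in $\F_2^7$), hence a $31$-configuration with a $16$-hyperplane exists; one then reruns the hyperplane-index and $31$-configuration analysis of \cite{heinlein2017classifying} under the sharpened bound $16$, which eliminates all hyperplane types but index~$1$, and for the two surviving $31$-configurations further ILPs show that every one of the $256$ solids must miss a specific solid $S$. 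Only then does \cite[Theorem~10]{heinlein2017classifying} apply. Saying you would ``use the structural results of \cite{heinlein2017classifying}'' points in the right direction, but the paper's proof shows this is not a citation but a nontrivial extension of that computation, and your proposal does not indicate how to carry it out.
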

\begin{proof}
  Let $\mathcal{C}$ be a subspace code in $\F_2^8$ with minimum subspace distance $d=6$ and maximum cardinality. 
  As discussed above, we have $\#\mathcal{C}=257$. Setting $\delta_i=\delta_i(\mathcal{C})$ for $0\le i\le 8$, we 
  observe $\delta_0+\delta_1+\delta_2\le 1$, $\delta_3\le \smax_2(8,6;3)=34$,  $\delta_4\le \smax_2(8,6;4)=257$, 
  $\delta_5\le \smax_2(8,6;5)=\smax_2(8,6;3)=34$, and $\delta_6+\delta_7+\delta_8\le 1$. If $\delta_0=1$ or $\delta_1=1$, then 
  $\delta_0+\delta_1=1$ and $\delta_2+\delta_3+\delta_4=0$, so that $\#\mathcal{C}\le 1+34+1<257$, which is a contradiction. 
  Thus $\delta_0=\delta_1=0$ and, by orthogonality, $\delta_7=\delta_8=0$. As an abbreviation we denote the set of $4$-dimensional 
  codewords by $\mathcal{C}_4$, i.e., $\delta_4=\#\mathcal{C}_4$.
  
  Again by orthogonality, we assume $\delta_2+\delta_3\ge\delta_5+\delta_6$. The case $\delta_2+\delta_3=0$ was treated in 
  \cite[Theorem 1]{heinlein2017classifying}. We first consider the cases where $\delta_3$ is \textit{large}. Note that $\delta_3\ge 1$ 
  implies $\delta_2=0$. For a given point $P$ let $\mathcal{C}_P=\left\{X\in\mathcal{C}_4\,:\,P\le X\right\}$ the set 
  of $4$-dimensional codewords containing $P$. Since two $4$-subspaces in $\mathcal{C}$ cannot intersect in a line, we conclude 
  $\#\mathcal{C}_P\le \smax_2(7,6;3)=17$ modding out $P$ from the codewords of $\mathcal{C}_P$. In $\mathcal{C}$ each two planes 
  and each pair of a plane and a solid have to be disjoint. Since every solid contains $15$ points we thus have 
  $\delta_4 \le\left(255-7\delta_3\right)\cdot\frac{17}{15}$, so that $\#\mathcal{C}\le \delta_4+2\delta_3\le 
  289-\frac{89\delta_3}{15}$ for $\delta_3\ge 1$. If $\delta_3\ge 6$ this gives $\#\mathcal{C}\le 253.4$, so that we can assume 
  $\delta_3\le 5$ in the following. 
  
  If $2\le \delta_3\le 5$, then let $X_1,X_2$ two distinct planes in $\mathcal{C}$. We consider the set $\mathcal{S}$ of 
  $6$-subspaces $S$ such that $\dim(S\cap X_1)=\dim(S\cap X_2)=1$. Either theoretically or computationally, we can easily 
  determine $\#\mathcal{S}=\gauss{3}{1}{2}^2\cdot 2^5\cdot (2^2-1)\cdot(2-1)=4704$. Each element $C\in\mathcal{C}_4$ 
  is contained in at least $\gauss{4}{2}{2}-2\cdot\gauss{3}{2}{2}=21$ elements from $\mathcal{S}$, which may be simply 
  checked by a computer enumeration or the following theoretic reasoning. There are $\gauss{4}{2}{2}$ 
  $6$-subspaces $S$ containing $C$. Since $X_1$ and $X_2$ are disjoint from $C$ we have $\dim(S\cap X_1),\dim(S\cap X_2)\in\{1,2\}$. 
  If $S$ contains any of the $\cdot\gauss{3}{2}{2}$ lines of $X_1$, then this line and $C$ uniquely determine $S$. The same 
  applies to $X_2$. Thus, $C$ is contained in at least $\gauss{4}{2}{2}-2\cdot\gauss{3}{2}{2}=21$ elements from $\mathcal{S}$.   
  Since no two $4$-dimensional codewords can be contained in an element from $\mathcal{S}$,  
  we have $\delta_4\le \frac{4704}{21} \le 234$ and $\#\mathcal{C}\le 2\delta_3+\delta_4\le 2\cdot 5+
  234<257$.
  
  Until now, we have concluded $\delta_2+\delta_3\le 1$, $\delta_5+\delta_6\le 1$, and $255\le \delta_4\le 257$. Next we consider 
  the cases $\delta_2=1$ or $\delta_3=1$. 
  
  If $\delta_2=1$, then let $L$ denote the unique $2$-dimensional codeword and $\mathcal{S}$ 
  be the set of $6$-subspaces in $\F_2^8$ that are disjoint to $L$. We have $\#\mathcal{S}=2^{12}=4096$ and each element from $\mathcal{C}_4$ 
  is contained in exactly $r=2^4=16$ elements from $\mathcal{S}$, so that $\delta_4\le \frac{4096}{16}=256$. Assume that $X$ is a 
  $5$-dimensional codeword. Then, $L$ and $X$ have to be disjoint and $X$ intersects every element from $\mathcal{C}_4$ exactly in a point. 
  So, if $X$ intersects an element $S\in\mathcal{S}$ in a solid, then $S$ cannot contain a $4$-dimensional codeword. Since there are 
  at least $\gauss{5}{4}{2}=31$ such elements in $\mathcal{S}$, we have $\delta_4\le \frac{4096-31}{16}<255$, so that $\#\mathcal{C}<257$ 
  and we assume that there is no $5$-dimensional codeword.  
  Alternatively, assume that $Y$ is a $6$-dimensional codeword. Then either $L$ and $Y$ are disjoint or they intersect in a point $P$. 
  In both cases $Y$ intersects every element from $\mathcal{C}_4$ in precisely a line. So, if $Y$ intersects an element $S\in\mathcal{S}$ in a 
  $5$-subspace, then $S$ cannot contain a $4$-dimensional codeword. If $\dim(L\cap Y)=0$, then there are at least $\gauss{6}{5}{2}=63$ such 
  elements in $\mathcal{S}$, so that $\delta_4\le \frac{4096-63}{16}<253$ and $\#\mathcal{C}<257$. If $L\cap Y=P$, then there are 
  at least $\gauss{6}{5}{2}-\gauss{5}{4}{2}=32$ such elements in $\mathcal{S}$, so that $\delta_4\le \frac{4096-32}{16}= 254$ and $\#\mathcal{C}<257$. 
  To sum up, if $\delta_2=1$, then only $\delta_3=\delta_5=\delta_6=0$ and $\delta_4=256$ is possible.
  
  If $\delta_3=1$, then let $E$ denote the unique $3$-dimensional codeword and $\mathcal{S}$ be the set of $6$-subspaces in $\F_2^8$ that 
  intersect $E$ in precisely a point. We have $\#\mathcal{S}=\gauss{3}{1}{2}\cdot 2^{10}=7168$ and each element from $\mathcal{C}_4$ is 
  contained in exactly $r=\gauss{3}{1}{2}\cdot 2^2=28$ elements from $\mathcal{S}$, so that $\delta_4\le \frac{7168}{28}=256$. Due 
  to the previous argument and orthogonality $\delta_6=1$ is impossible, so that we assume that $X$ is a $5$-dimensional codeword. Then, 
  $X$ has to intersect 
  every element from $\mathcal{C}_4$ in precisely a point and is either disjoint to $E$ or $P=X\cap E$ is a point. So, if $X$ intersects an 
  element $S\in\mathcal{S}$ in a solid, then $S$ cannot contain a $4$-dimensional codeword. If $\dim(E\cap X)=0$, then there are at least  
  $\gauss{3}{1}{2}\cdot\gauss{5}{4}{2}=217$ such elements in $\mathcal{S}$, so that $\delta_4\le \frac{28672-217}{112}<255$ and 
  $\#\mathcal{C}<257$. If $E\cap X=P$, then $X$ contains $16$ solids disjoint to $P$. Adding a point in $E\backslash P$ gives 
  $16\cdot 6=96$ choices for a $5$-subspace $F\le\langle E,X\rangle$ that intersects $E$ in a point. In each of these cases there are two 
  possibilities to extend $F$ to different elements in $\mathcal{S}$, so that $\delta_4\le \frac{28672-192}{112}<255$ and 
  $\#\mathcal{C}<257$. To sum up, if $\delta_3=1$, then only $\delta_2=\delta_5=\delta_6=0$ and $\delta_4=256$ is possible.
  
  So, it remains to consider the cases where $\delta_2+\delta_3=1$ and $\delta_4=256$. By $L$ or $E$ we denote the unique $2$- or 
  $3$-dimensional codeword in $\mathcal{C}$, respectively.  
  Every hyperplane $H$ of $\F_2^8$ contains at most $\smax_2(7,6;3)=17$ elements from $\mathcal{C}_4$ 
  due to orthogonality. In \cite{honold2016classification} the authors determined that there are exactly $715$ non-isomorphic 
  subspace codes in $\F_2^7$ with minimum subspace distance $d=6$ consisting of $17$ planes and $14\,445$ non-isomorphic such subspace codes 
  consisting of $16$ planes. By orthogonality, there are exactly $715$ non-isomorphic  subspace codes in $\F_2^7$ with minimum subspace 
  distance $d=6$ consisting of $17$ solids and $14\,445$ non-isomorphic such subspace codes consisting of $16$ solids. If a hyperplane $H$ 
  of $\F_2^8$ contains $17$ elements from $\mathcal{C}_4$, then each point is covered at least once, which also can 
  be directly deduced from the unique hole configuration of a maximum partial plane spread in $\F_2^7$, see e.g.\ \cite{honold2018partial}. 
  Since at least a point of $L$ or $E$ is contained in any hyperplane, we conclude that $H$ contains at most $16$ elements from $\mathcal{C}_4$. 
  Similarly, if $P$ is a point of $\mathbb{F}_2^8$ that is contained in $17$ elements from $\mathcal{C}_4$, then there exists a $5$-subspace $K$ 
  and a solid $S'\le K$ such that the points in $K\backslash S'$ are exactly those that are not contained in any element from $\mathcal{C}_4$. 
  This fact can again be verified computationally, using the $715$ non-isomorphic configurations mentioned above, or from the 
  unique hole configuration of a maximum partial plane spread in $\F_2^7$. Since neither $L$ nor $E$ can be contained in $K\backslash S'$, 
  every point $P$ of $\F_2^8$ is incident with at most $16$ elements in $\mathcal{C}_4$. If every hyperplane contains at most $15$ codewords, 
  then $\#\mathcal{C}_4\le \gauss{8}{1}{2}\cdot 15/\gauss{4}{1}{2}=255$, i.e., those cases do not need to be considered. Now, let $P$ be a 
  point and $H$ be a hyperplane of $\F_2^8$ such that $P$ is not contained in $H$. By $\mathcal{C}_{P,H}$ we denote the 
  set of codewords from $\mathcal{C}_4$ that either contain $P$ or are contained in $H$. If $\#\mathcal{C}_{P,H}=31$ we speak of a 
  $31$-configuration, see \cite{heinlein2017classifying}. As argued above, there is a hyperplane $H$ of $\F_2^8$ containing exactly $16$ 
  codewords from $\mathcal{C}_4$. If $\mathcal{C}_4$ contains at most $14$ codewords incident with a point $P$ for all points $P$ that are not 
  contained in $H$, then $15\cdot \mathcal{C}_4\le 127\cdot 16+128\cdot 14=3824=15\cdot 254+14$, so that $\#\mathcal{C}_4\le 254$. 
  Thus, $\mathcal{C}_4$ contains  a $31$-configuration $\mathcal{C}_{P,H}$ with $16$ solids in $H$ and $15$ solids containing $P$.
    
  In \cite{heinlein2017classifying} the $715+14445$ isomorphism types of subspace codes in $\F_2^7$ with minimum subspace distance $6$ 
  consisting of $17$ or $16$ solids were tried to extend to subspace codes of solids in $\F_2^8$ with minimum subspace distance $6$. As an 
  intermediate step $31$-configurations were enumerated. So, all required preconditions are also given in our situation. For the exclusion 
  of cases linear and integer linear programming formulations have been used. Let $z_{8}^{\text{LP}}$, see Lemma~\ref{lemma_ILP_1}, 
  and $z_{7}^{\text{BLP}}$, see Lemma~\ref{lemma_ILP_2},  
  denote the optimal target values of two of such (integer) linear programming formulations used and defined in \cite{heinlein2017classifying}. 
  According to \cite[Table 2]{heinlein2017classifying}, see also Table~\ref{tab:details}, only the cases of hyperplanes with indices $1$, and $9$ can lead to $256$ solids. 
  We state that there are four additional cases in which $256\le z_{8}^{\text{LP}}<257$, which are not listed in 
  \cite[Table 2]{heinlein2017classifying}. 
  The next smaller LP value is $255.879$, which is significantly far away from $256$. In all those four cases $z_{7}^{\text{BLP}}\le 254$ 
  is verified in less than $2$~minutes. Applying ILP $z_{7}^{\text{BLP}}$ to the case with index $9$ yields an upper bound of 
  $255$ after $75$~hours of computation time. Thus, there just remains the hyperplane with index $1$.
    
  Extending the hyperplane with index $1$ to a $31$-configuration yields $z_8^{\text{LP}}\ge 255.9$ in $234$ cases and 
  then $z_8^{\text{BLP}}\ge 256$ in $2$ cases. 
  In the latter cases there exists a unique solid $S$ that is disjoint 
  to the $31$ codewords.
  
  Next we use ILP computations in order to obtain some structural results. For any line $L$ disjoint to the $31$ codewords 
  and not completely contained in $S$, we try to enlarge each given $31$-configuration keeping $L$ disjoint to all codewords. 
  It turned out, that at most $255$ $4$-dimensional codewords are possible in total. Next we forbid that $4$-dimensional codewords 
  intersect in a dimension larger than $2$ and check all cases in which a $4$-dimensional codeword intersects $S$ in dimension $1$ or $2$. 
  In all cases we verify by an ILP computation that then the total number of $4$-dimensional codewords is at most $255$. 
    
  Thus, we have computationally verified, that $\delta_4=256$ is only possible if $S$ is disjoint to every codeword. This means, that  
  $\mathcal{C}_4$ is a lifted MRD code. From \cite[Theorem 10]{heinlein2017classifying} we then conclude that the code is isomorphic 
  to the lifted version of $\gabidulin_{8,4,3}$. Since all points outside of $S$ are covered, $L$ or $E$ have to lie completely 
  in $S$. The automorphism group of the lifted version of $\gabidulin_{8,4,3}$ has order $230\,400$ and acts transitively on 
  the set of points or planes that are contained in $S$ and on the set of $5$-subspaces that contain $S$. For lines there are two orbits, 
  one of length~$5$ and one of length~$30$. Thus, we obtain the isomorphism types listed in the statement of the theorem.
\end{proof}

Taking the automorphism $\pi$ into account reduces the $8$ isomorphism types of Theorem~\ref{thm_2_8_6_classification} to $5$. We remark 
that an alternative approach for the last part of the proof of Theorem~\ref{thm_2_8_6_classification} would be to classify 
all constant-dimension codes in $\F_2^8$ with minimum subspace distance $d=6$ consisting of $256$ solids and to extend them 
to subspace codes. Given the computational results from \cite{heinlein2017classifying} and the proof of 
Theorem~\ref{thm_2_8_6_classification} there remain only the types of hyperplanes with indices $1$, $7$, and $8$. This approach is 
indeed feasible but rather involved, so that we present a sketch in Section~\ref{sec_256} in the appendix as a further justification of 
the correctness of Theorem~\ref{thm_2_8_6_classification}. As a by-product we obtain the information that all $(8,256,6;4)_2$ 
constant-dimension codes can be extended to an $(8,257,6;4)_2$ constant-dimension code, see Proposition~\ref{prop_256}. This is different 
to the situation of $A_2(6,4;3)=77$, where there is no extendability result for $(6,76,4;3)_2$ constant-dimension codes. 

\begin{table}[htbp]
  \centering
  \(
  \begin{array}{|c||c|c|c|c|c|c|c|c|}
    \hline
    v\backslash d&1&2&3&4&5&6&7&8\\\hline\hline
    1&2(1)&\multicolumn{1}{c}{}&\multicolumn{1}{c}{}&\multicolumn{1}{c}{}&\multicolumn{1}{c}{}&\multicolumn{1}{c}{}&\multicolumn{1}{c}{}&\\\cline{1-3}
    2&5(1)&3(1)&\multicolumn{1}{c}{}&\multicolumn{1}{c}{}&\multicolumn{1}{c}{}&\multicolumn{1}{c}{}&\multicolumn{1}{c}{}&\\\cline{1-4}
    3&16(1)&8(2)&2(2)&\multicolumn{1}{c}{}&\multicolumn{1}{c}{}&\multicolumn{1}{c}{}&\multicolumn{1}{c}{}&\\\cline{1-5}
    4&67(1)&37(1)&5(4)&5(1)&\multicolumn{1}{c}{}&\multicolumn{1}{c}{}&\multicolumn{1}{c}{}&\\\cline{1-6}
    5&374(1)&187(2)&18(48217)&9(14)&2(3)&\multicolumn{1}{c}{}&\multicolumn{1}{c}{}&\\\cline{1-7}
    6&2825(1)&1521(1)&\text{108--117}&77(5)&9(5)&9(1)&\multicolumn{1}{c}{}&\\\cline{1-8}
    7&29212(1)&14606(2)&\text{614--776}&\text{334--407}&34(39)&17(1856)&2(4)&\\\cline{1-9}
    8&417199(1)&222379(2)& \text{5687--9268} & \text{4803--6479} & \text{263--326} & 257(8) & 17(572) & 17(8)\\\hline
  \end{array}
  \)\\[1ex]
  \caption{$\smax_2(v,d)\label{tbl:smax2}$ and isomorphism types of optimal codes for $v\leq 8$.}
  \label{table_overview} 
\end{table}

\section{Conclusion}
\label{sec_conclusion}

In this paper we have considered the determination of the maximal code sizes $\smax_2(v,d)$ of binary subspace codes in $\F_2^v$ for 
all $v\le 8$ and all minimum distances $d\in\{1,\dots,v\}$. The precise numbers are completely known for $v\le 5$ only. For larger 
dimensions of the ambient space we have obtained some improvements of lower and upper bounds. Whenever $\smax_2(v,d)$ is known 
exactly the number of isomorphism types is known, where we add two classifications in this paper. The current state-of-the-art is 
summarized in Table~\ref{table_overview}. The numbers in brackets state the number of isomorphism types with respect to $\GL(v,\F_2)$.

Of course, a natural challenge is to further tighten the stated bounds. Especially for minimum subspace distance $d=3$ it 
would be very interesting to come up with improved general constructions. The smallest open case is still $\smax_2(6,3)$. 


\appendix
\section{Constant-dimension codes in $\F_2^8$ with minimum subspace distance $d=6$ and cardinality $256$}
\label{sec_256}
Let $\mathcal{C}$ be a constant-dimension code in $\F_2^8$ with minimum subspace distance $d=6$ and cardinality 
at least $256$. After the proof of Theorem~\ref{thm_2_8_6_classification} we have already argued that any hyperplane 
$H$ of $\F_2^8$ containing $16$ or $17$ codewords of $\mathcal{C}$ are isomorphic to a type in \cite[Table 2]{heinlein2017classifying}, 
see also Table~\ref{tab:details},  
with index $1$, $7$, or $8$. The possible choices are even more restricted. In \cite{heinlein2017classifying} the authors 
have already determined the $2$ isomorphism types of $31$-configurations for index $1$ and the $240$ isomorphism types of $31$-configurations 
for index $7$ that can yield a constant-dimension code of cardinality at most $256$. Next, we perform some additional considerations 
and computations in order to conclude some results on the structure of $\mathcal{C}$. 

For index $1$ the $31$ solids of the $31$-configurations permit a unique solid $S$ with trivial intersection in both cases. In order to 
obtain some structural results we extend the ILP formulation for $z_8^{\operatorname{BLP}}$. In a first run we prescribe the $31$-configurations 
and $S$ as codewords. Additionally we require that the number of incidences between points of $S$ and codewords is at least $16$. As 
already reported in \cite{heinlein2017classifying}, an upper bound of $252$ could be computationally verified in less than $2$~hours in 
both cases. So, if $S$ is chosen as a codeword, then no other codeword intersects $S$. Removing $S$ yields $255$ codewords disjoint to a 
solid. In a second run we again prescribe the $31$-configurations but forbid the choice of $S$ as a codeword and additionally require 
that at least two codeword non-trivially intersecting $S$ are chosen. This results in $\#\mathcal{C}\le 255$ after roughly $1200$~hours 
of computation time. (We prescribe a codeword non-trivially intersecting $S$ an run the jobs in parallel.) So, $255$ codewords have to be 
disjoint to a solid.

For index $7$ and for each of the $240$ $31$-configurations there exists a unique solid $S$ that is disjoint to $30$ of the $31$ codewords of 
the $31$-configuration and intersects the $31$th codeword in a plane. Prescribing the $31$-configurations and requiring that the number of 
incidences between points of $S$ and codewords is at least $8$ gives an ILP formulations that yields $\#\mathcal{C}\le 242$ after 
a few hours of computation time, see \cite{heinlein2017classifying}. Removing the codeword that intersects $S$ in a plane yields $255$ 
codewords disjoint to a solid.

For index $8$ the ILP $z_{7}^{\text{BLP}}$ was utilized, i.e., after prescribing the $17$ solids corresponding to index $8$ extensions 
by planes in $\F_2^7$, not intersecting themselves and any of the $17$ solids in a line, were searched. The automorphism group of order 
$32$ of the $17$ solids was used to partition the set of $948$ feasible planes into $56$ orbits. In \cite{heinlein2017classifying} it 
was stated that all $56$ cases lead to $z_{7}^{\text{BLP}}\le 256$, which was sufficient for the determination of $A_2(8,6;4)$. 
Here, we tighten the computational results to $z_{7}^{\text{BLP}}\le 255$. To this end we build up 
an ILP formulation using binary variables for the $948$ planes. For each line $L$ at most one plane 
containing $L$ can be chosen. By an explicit inequality we ensure that at least $256-17=239$ planes 
are chosen. For each of the $56$ orbits we try to maximize the number of chosen planes in that orbit 
prescribing one arbitrary plane from that orbit. After ten hours computation time in each of these 
$56$ cases, we obtain upper bounds for the number of chosen planes per orbit. 
Those upper bounds sum to at least $241$ planes in total. So, in a second run, using all computed upper bounds for the 
orbits, we quickly verify the infeasibility of the problem, i.e., no hyperplane with index~$8$ can occur in $\mathcal{C}$. 
Using more but shorter runs iteratively updating the upper bounds for the orbits would reduce the computation time significantly.  

To sum up, a rather small number of configurations can lead to constant-dimension codes of cardinality $256$ only. As argued above, 
at least $255$ codewords have to be disjoint to a given solid. Next we show that those $255$ codewords have to arise from an LMRD 
code by removing one codeword. To that end we show an extendability result for an LMRD code based on arguments using divisible 
linear codes, see \cite{honold2018partial} for an introduction into the application of projective divisible linear codes in the 
characterization of partial spreads. We need a slightly more general approach using multisets of subspaces, see \cite{kiermaier2017improvement}.  
To any multiset $\mathcal{P}$ of points in $\F_q^v$ we associate a characteristic function $\chi_{\mathcal{P}}(P)\in\mathbb{N}$ that 
gives the number of occurrences of $P$ in $\mathcal{P}$.
\begin{definition}
  A multiset  $\mathcal{P}$ of points in $\F_q^v$ characterized by its characteristic function $\chi_{\mathcal{P}}$  
  is called \emph{$q^r$-divisible}, for a non-negative integer $r$, if $\sum_{{P\le\F_q^v}\atop{\dim(P)=1}}\chi_{\mathcal{P}}(P)\equiv 
  \sum_{{P\le H}\atop{\dim(P)=1}}\chi_{\mathcal{P}}(P)\pmod{q^r}$ for any hyperplane $H$ of $\F_q^v$.  
\end{definition}
Note that the  condition for a $q^r$-divisible multiset of points is trivially satisfied for $r=0$. If $r\ge 1$, then the intersection 
of a $q^r$-divisible multiset of points with a hyperplane yields a $q^{r-1}$-divisible multiset of points, see e.g.\ \cite{honold2018partial}. 

If $\mathcal{C}$ is an arbitrary multiset of $k$-subspaces, then replacing every element of $\mathcal{C}$ by its set of $\gauss{k}{1}{q}$  
contained points gives a multiset $\mathcal{P}$ of points. In \cite{kiermaier2017improvement} it was shown that $\mathcal{P}$ is 
$q^{k-1}$-divisible. Moreover, if the corresponding characteristic function satisfies $\chi_{\mathcal{P}}(P)\le \Lambda$ for some 
integer $\Lambda$, then $\Lambda-\chi_{\mathcal{P}}(P)$ is the characteristic function of a $q^{k-1}$-divisible multiset of points. 
The latter construction is also called the $\Lambda$-complement of $\mathcal{P}$.

As a further ingredient we need a few classification results for $q^r$-divisible multisets of points. 

\begin{lemma}
  \label{lemma_simplex_unique}
  For a positive integer $r$, let $\mathcal{P}$ be a $q^r$-divisible multiset of points of cardinality $\gauss{r+1}{1}{q}=\frac{q^{r+1}-1}{q-1}$. 
  Then $\mathcal{P}$ coincides with the set of points of a suitable $(r+1)$-subspace.   
\end{lemma}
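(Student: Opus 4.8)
The plan is to first rewrite the divisibility condition in a usable numerical form, then pin down the dimension of the span of $\mathcal{P}$ by a double count, and finally identify $\mathcal{P}$ with the full point set of that span through a short counting argument on the complementary multiset. Writing $n=\#\mathcal{P}=\gauss{r+1}{1}{q}$ and $m=\gauss{r}{1}{q}=\frac{q^r-1}{q-1}$, the identity $n=q^r+m$ shows that $q^r$-divisibility is equivalent to $\sum_{P\le H}\chi_{\mathcal{P}}(P)\equiv m\pmod{q^r}$ for every hyperplane $H$ of $\F_q^v$.

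Next I would pass to $U=\langle\operatorname{supp}\mathcal{P}\rangle$ of dimension $s$. Every hyperplane $H'$ of $U$ is of the form $H\cap U$ for some hyperplane $H$ of $\F_q^v$ with $U\not\le H$, and then $\sum_{P\le H'}\chi_{\mathcal{P}}(P)=\sum_{P\le H}\chi_{\mathcal{P}}(P)\equiv m\pmod{q^r}$. Since $\mathcal{P}$ spans $U$, at least one point of $\mathcal{P}$ lies outside the proper subspace $H'$, so $0\le\sum_{P\le H'}\chi_{\mathcal{P}}(P)\le n-1<q^r+m$; together with the congruence this forces the value to be exactly $m$. Thus every hyperplane of $U$ carries exactly $m$ points of $\mathcal{P}$. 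Double counting the incident pairs (point with multiplicity, hyperplane of $U$ through it) then yields $n\cdot\gauss{s-1}{1}{q}=\gauss{s}{1}{q}\cdot m$; substituting the values of $n$ and $m$ this becomes $(q^{r+1}-1)(q^{s-1}-1)=(q^r-1)(q^s-1)$, which simplifies to $q^r=q^{s-1}$, hence $s=r+1$. In particular $U$ has exactly $n$ points.

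Finally, I would compare $\mathcal{P}$ with the multiset $\mathbf{1}$ that takes every point of $U$ with multiplicity one. Since $\mathbf{1}$ also has total size $n$ and meets every hyperplane of $U$ in exactly $m$ points, the integer-valued function $\chi'=\mathbf{1}-\chi_{\mathcal{P}}$ on the points of $U$ satisfies $\sum_P\chi'(P)=0$ and $\sum_{P\le H'}\chi'(P)=0$ for every hyperplane $H'$ of $U$. Fixing a point $Q$ and summing the latter over the $m$ hyperplanes of $U$ through $Q$, and using that two distinct points of $U$ lie on $\lambda=\gauss{r-1}{1}{q}$ common hyperplanes, I obtain $\chi'(Q)(m-\lambda)+\lambda\sum_P\chi'(P)=0$, that is $\chi'(Q)\cdot q^{r-1}=0$ because $m-\lambda=q^{r-1}$. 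Hence $\chi'\equiv 0$, so $\mathcal{P}$ is precisely the set of points of the $(r+1)$-subspace $U$. The main work is the reduction to $U$ together with the tight size bound that pins each hyperplane value to exactly $m$; the two counting identities afterwards are routine.
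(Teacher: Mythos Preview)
Your proof is correct and follows essentially the same outline as the paper's: restrict to the span $U$, use the size bound $n-1<q^r+m$ together with $q^r$-divisibility to force every hyperplane of $U$ to meet $\mathcal{P}$ in exactly $m=\gauss{r}{1}{q}$ points, and then double count point--hyperplane incidences to conclude $\dim U=r+1$. The only difference is in the last step: the paper double counts incidences between hyperplanes and ordered pairs of points to show there are no repeated points (hence $\mathcal{P}$ is the full point set of $U$), whereas you subtract the uniform multiset and sum the resulting zero hyperplane values over all hyperplanes through a fixed point to obtain $\chi'(Q)\cdot q^{r-1}=0$. Both arguments are short incidence counts exploiting the constant hyperplane intersection, so this is a cosmetic rather than a substantive variation.
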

\begin{proof}
  Let $v$ denote the dimension of the span of $\mathcal{P}$ and embed $\mathcal{P}$ in $\mathbb{F}_q^v$. Since $\gauss{r+1}{1}{q}-2\cdot q^r$ 
  is negative, every hyperplane of $\mathbb{F}_q^v$ contains exactly $\gauss{r+1}{1}{q}-q^r=\gauss{r}{1}{q}$ points. Double counting 
  the incidences between points and hyperplanes gives $\gauss{r}{1}{q}\cdot\gauss{v}{v-1}{q}=\gauss{r+1}{1}{q}\cdot\gauss{v-1}{v-2}{q}$, 
  so that $v=r+1$. Double counting the incidences between hyperplanes and pairs of points gives that there are no multiple points, so that 
  the statement follows.     
\end{proof}



\begin{lemma}
  \label{lemma_special_1}
  If $\mathcal{P}$ is a $2^3$-divisible multiset of points over $\F_2$ of cardinality $30$ and $\mathcal{L}$ be a set of 
  $70$ lines such that $7\cdot\chi_{\mathcal{P}}(P)=\chi_{\mathcal{P}'}(P)$ for all points $P$, where $\mathcal{P}'$ denotes the 
  multiset of points of the lines from $\mathcal{L}$, then $\mathcal{P}$ equals the union of the set of points of two solids.
\end{lemma}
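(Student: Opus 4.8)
The plan is to extract from the two hypotheses a rigid combinatorial object and then recognise it as two disjoint solids. First I would exploit the identity $\chi_{\mathcal{P}'}(P)=7\chi_{\mathcal{P}}(P)$. If a line $\ell\in\mathcal{L}$ contained a point $P\notin\operatorname{supp}(\mathcal{P})$ then $\chi_{\mathcal{P}'}(P)\ge 1$ while $\chi_{\mathcal{P}}(P)=0$, a contradiction; hence every line of $\mathcal{L}$ is \emph{internal}, meaning all three of its points lie in $S:=\operatorname{supp}(\mathcal{P})$. Comparing incidences, $\sum_P 7\chi_{\mathcal{P}}(P)=7\cdot 30=210=3\cdot 70$, and each $P$ lies on exactly $7\chi_{\mathcal{P}}(P)$ lines of $\mathcal{L}$; since distinct lines through $P$ meet only in $P$, these cover $14\chi_{\mathcal{P}}(P)$ further points of $S$. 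Once $\mathcal{P}$ is known to be multiplicity-free (see below), $S$ becomes a set of $30$ points, each incident with exactly $7$ lines, and the relation ``joined by a line of $\mathcal{L}$'' becomes a $14$-regular graph $\Gamma$ on $S$ whose $70$ lines are precisely the triangles of a partition of its $210$ edges (each pair determines at most one line). The whole statement then reduces to showing that $\Gamma$ is \emph{disconnected}: a $14$-regular graph on $30$ vertices with a proper component has every component of size exactly $15$, hence two cliques $K_{15}$; and within a clique every pair $P,Q$ is collinear, so $P+Q\in S$, whence each clique together with $0$ is closed under addition, i.e.\ a $4$-dimensional subspace whose $15$ points it exhausts. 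Two such solids with $15+15=30$ points meet trivially, giving exactly the asserted conclusion.

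The engine controlling multiplicities and dimension is the $2^3$-divisibility. Writing $m_H=\#(\mathcal{P}\cap H)\equiv 30\equiv 6\pmod 8$ for every hyperplane $H$, the first and second moments $\sum_H m_H$ and $\sum_H m_H^2$ (evaluated through $\sum_P\chi_{\mathcal{P}}(P)$ and $\sum_P\chi_{\mathcal{P}}(P)^2$, exactly as in the proof of Lemma~\ref{lemma_simplex_unique}) express the hyperplane spectrum in terms of $v=\dim\langle\mathcal{P}\rangle$. Because $\mathcal{P}$ spans, $m_H\le 22$, so $m_H\in\{6,14,22\}$, and solving the moment equations in the multiplicity-free case yields that the number of $6$-hyperplanes equals $2^{v-8}-1$. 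A $6$-point section would be a $2^2$-divisible \emph{set} of $6$ points (sections of $q^r$-divisible multisets are $q^{r-1}$-divisible); an analogous moment count forces such a set to span only dimension $2$, which is impossible for $6$ distinct points. Hence there are no $6$-hyperplanes, $v=8$, and every hyperplane meets $\mathcal{P}$ in $14$ or $22$ points. The same ``no small divisible set'' input, applied to sections through a hypothetical point of multiplicity $\ge 2$, is what rules out nontrivial multiplicities and secures the multiplicity-free hypothesis used above.

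The main obstacle is the disconnection of $\Gamma$: its $14$-regularity and triangle-decomposition alone do not force it, since abstract Steiner-type configurations with these local parameters exist, so the $2^3$-divisibility must be brought in a second time. My plan is a descent on the $22$-hyperplanes: a $22$-point section is a $2^2$-divisible set in a $7$-space which, by the same moment-and-section analysis, must be the disjoint union of a solid and a plane; the solid occurring in such a section is a $K_{15}$-clique of $\Gamma$ disjoint from the other $15$ points, exhibiting $\Gamma$ as disconnected and finishing the proof via the closure argument of the first paragraph. Making this descent airtight—establishing the solid-plus-plane shape of \emph{every} $22$-section and the global consistency of the solids so produced—is the delicate and most laborious step; here I expect either a short additional divisible-set classification or a modest computer verification to be the cleanest route, in keeping with the style of this section.
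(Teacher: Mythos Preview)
Your proposal contains a genuine error: the claim that $\mathcal{P}$ must be multiplicity-free is false, and the vague appeal to ``sections through a hypothetical point of multiplicity $\ge 2$'' does not establish it. In fact, two solids meeting in a single point $P$ give a valid instance of the lemma with $\chi_{\mathcal{P}}(P)=2$: the union (as a multiset) has cardinality $30$ and is $2^3$-divisible, and taking $\mathcal{L}$ to be all $35+35=70$ lines of the two solids, the common point lies on $7+7=14$ of them and every other point on exactly $7$, so the condition $\chi_{\mathcal{P}'}=7\chi_{\mathcal{P}}$ holds. Hence the multiplicity-free reduction you build the whole argument on (the $14$-regular graph $\Gamma$, the forced dimension $v=8$, and the $22$-section analysis) collapses for this case. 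Your ``no $6$-section'' argument likewise presupposes a set: a doubled line is a $2^2$-divisible \emph{multiset} of cardinality $6$.

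The paper proceeds quite differently and never asserts multiplicity-freeness. It uses the line hypothesis directly to kill $a_6$: every line of $\mathcal{L}$ meets every hyperplane, and a point lies on at most $7$ lines of $\mathcal{L}$, so $7m_H\ge 70$, i.e.\ $m_H\ge 10$. With $a_6=0$, the first two moments give $a_{14},a_{22}$ in terms of $y=2^{v-2}$; a further double count of pairs yields $128=y(x+2)$ with $x=\sum_P\chi_{\mathcal{P}}(P)(\chi_{\mathcal{P}}(P)-1)$, leaving $(y,x)\in\{(4,30),(8,14),(16,6),(32,2),(64,0)\}$. The case $y=4$ falls to a line count, $y\in\{8,16\}$ are eliminated by an ILP, and for $y\in\{32,64\}$ the ILP exhibits a solid entirely inside $\operatorname{supp}(\mathcal{P})$; peeling it off leaves a $2^3$-divisible multiset of cardinality $15$, which by Lemma~\ref{lemma_simplex_unique} is a second solid. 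Note that $y=32$ (so $v=7$, $x=2$) is precisely the non-multiplicity-free case above.

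Even in the $v=8$ branch your plan is incomplete: the assertion that every $2^2$-divisible set of $22$ points is a disjoint solid-plus-plane would itself need proof, and you still have to show the solids arising from different $22$-sections coincide.
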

\begin{proof}
  Let $v$ denote the dimension of the span of $\mathcal{P}$ and embed $\mathcal{P}$ in $\mathbb{F}_2^v$. By $a_i$ we denote the number 
  of hyperplanes of $\F_2^v$ with exactly $i$ points. Due to divisibility only $a_6$, $a_{14}$, and $a_{22}$ can be positive. Any of the 
  $70$ lines intersects each hyperplane in at least one point so that each hyperplane contains at least $70/7=10$ points, which gives  
  $a_6=0$. Counting incidences between points and hyperplanes gives $a_{14}+a_{22}=2^v-1$ and $14a_{14}+22a_{22}=30\cdot \left(2^{v-1}-1\right)$. 
  Setting $y=2^{v-2}$ we obtain $a_{22}=\frac{y}{2}-2$ and $a_{14}=\frac{7y}{2}+1$. If $x$ denotes the number of ordered pairs of elements 
  of $\mathcal{P}$ that correspond to the same point, then double counting the incidences between hyperplanes and pairs of points gives
  $
    14\cdot 13\cdot a_{14}+22\cdot 21\cdot a_{22} =x\cdot (2y-1)+(30\cdot 29-x)\cdot(y-1)
  $,  
  so that $128=y(x+2)$ and
  $(y,x)\in\left\{(4,30), (8,14), (16,6), (32, 2),(64,0)\right\}$. Here $y=4$ corresponds to $v=4$, which allows only $\gauss{4}{2}{2}=35<70$ lines.
  
  Via an ILP formulation we have checked that the cases $y=8$ and $y=16$ are impossible. For $y=32$ and $y=64$ there has to be a solid 
  such that all $15$ contained points have multiplicity at least $1$ in $\mathcal{P}$.
  Since a solid is $2^3$-divisible we can remove the 
  corresponding points of $\mathcal{P}$ and apply Lemma~\ref{lemma_simplex_unique} with $q=2$ and $r=3$ in order to conclude the statement.  
\end{proof}


\begin{proposition}
  Let $\mathcal{C}$ be a set of $254$ or $255$ solids in $\F_2^8$ with minimum subspace distance $d=6$ that intersect a fixed but arbitrary 
  solid $S$ trivially. Then, there exists a solid $U$ in $\F_2^8$ that has subspace distance at least $6$ to the elements of 
  $\mathcal{C}$ and intersects $S$ trivially. 
\end{proposition}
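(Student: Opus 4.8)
The plan is to identify the ``holes'' of $\mathcal{C}$ with a divisible multiset of points and then to recognise this multiset as one solid (when $\#\mathcal{C}=255$) or two solids (when $\#\mathcal{C}=254$), one of which will be the desired $U$. First I would record the geometry forced by disjointness from $S$: since every codeword $C$ has $\dim C=\dim S=4$ and $C\cap S=0$ in $\F_2^8$, each $C$ is a complement of $S$, and two solids $X,Y$ satisfy $\sdist(X,Y)=8-2\dim(X\cap Y)\ge 6$ exactly when $\dim(X\cap Y)\le 1$. Writing the codewords as graphs $\{(x,Ax):x\in\F_2^4\}$ of linear maps $A\colon\F_2^4\to S$, the condition $\sdist\ge 6$ becomes $\operatorname{rank}(A-B)\ge 3$. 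For a point $P=(x,y)$ with $x\ne 0$ set $\chi(P)=\#\{A:Ax=y\}$; then $\chi(P)=0$ for every $P\le S$, while for fixed $x$ the differences $A-A'$ within one fibre vanish on $\langle x\rangle$ and have rank at least $3$, hence (after dividing out $\langle x\rangle$) form a rank-metric code in $\F_2^{4\times 3}$ of minimum distance $3$. The Singleton-type bound of \cite{delsarte1978bilinear} therefore yields $\chi(P)\le 16$ for every $P\not\le S$.

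Next I would define the deficiency multiset $\mathcal{H}$ by $\chi_{\mathcal{H}}(P)=16-\chi(P)$ for $P\not\le S$ and $\chi_{\mathcal{H}}(P)=0$ for $P\le S$. Expressing $\mathcal{H}$ as $16$ copies of all points of $\F_2^8$, minus $16$ copies of the points of $S$, minus the point multiset $\mathcal{P}_{\mathcal{C}}$ of the codewords, each summand is $2^3$-divisible — the whole point set is even $2^7$-divisible, the solid $S$ is $2^3$-divisible, and $\mathcal{P}_{\mathcal{C}}$ is $2^3$-divisible by the recalled result of \cite{kiermaier2017improvement} — so $\mathcal{H}$ is $2^3$-divisible, and it is nonnegative by the bound $\chi\le 16$. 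Its cardinality is $240\cdot 16-15\cdot\#\mathcal{C}=3840-15\#\mathcal{C}$, namely $15$ if $\#\mathcal{C}=255$ and $30$ if $\#\mathcal{C}=254$. For $\#\mathcal{C}=255$, Lemma~\ref{lemma_simplex_unique} (with $q=2$, $r=3$) identifies $\mathcal{H}$ with the point set of a solid $U$, which avoids $S$ and hence is a complement of $S$. For $\#\mathcal{C}=254$ I would produce the $70$ lines required by Lemma~\ref{lemma_special_1}: let $\mathcal{L}$ consist of the lines disjoint from $S$ lying in no codeword. Of the $8960$ lines disjoint from $S$, exactly $\gauss{4}{2}{2}\cdot\#\mathcal{C}=35\cdot 254$ are covered (distinctly, since codewords meet in at most a point), leaving $70$; moreover each $P\not\le S$ lies on $127-15=112$ lines disjoint from $S$, of which $7\chi(P)$ are covered, so $\chi_{\mathcal{P}'}(P)=112-7\chi(P)=7\chi_{\mathcal{H}}(P)$. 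Thus Lemma~\ref{lemma_special_1} applies and $\mathcal{H}$ is the union of two solids $U_1,U_2$ avoiding $S$.

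It remains to check that a hole solid really extends $\mathcal{C}$, which I expect to be the main point. The key observation is that the uncovered lines disjoint from $S$ are precisely the lines of the hole solid(s): through a point $P$ with $\chi(P)=16$ all $112$ lines disjoint from $S$ are covered, so every uncovered line meets only points of $\operatorname{supp}(\mathcal{H})$; through a point $P\in U$ with $\chi(P)=15$ there are $112-105=7$ uncovered lines, matching the $7$ lines of $U$ through $P$. Hence an uncovered line through $P\in U$ cannot leave $U$, for its exit point would carry no uncovered line, so it lies in $U$; comparing the $35$ uncovered lines with the $35$ lines of $U$ forces equality. Consequently no line of $U$ lies in any codeword, so $\dim(U\cap C)\le 1$, i.e.\ $\sdist(U,C)\ge 6$, for every $C\in\mathcal{C}$, while $U\cap S=0$; this settles $\#\mathcal{C}=255$ and, taking $U=U_1$, also $\#\mathcal{C}=254$. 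The delicate bookkeeping I would watch is the possibility that $U_1$ and $U_2$ share points (then some $\chi$ equals $14$ and ``mixed'' lines with one point in each $U_i$ must be analysed); ruling this out, or verifying directly that at least one $U_i$ still has all of its lines uncovered, is the only step going beyond the line count above.
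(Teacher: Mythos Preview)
Your approach mirrors the paper's: form the $2^3$-divisible hole multiset as the $16$-complement, invoke Lemma~\ref{lemma_simplex_unique} (for $\#\mathcal{C}=255$) or Lemma~\ref{lemma_special_1} (for $\#\mathcal{C}=254$), and then use the uncovered-line count to conclude that the resulting solid(s) share no line with any codeword. The concern you flag in the $254$ case dissolves by your own bookkeeping: a line with all three points in $U_1\cup U_2$ must lie entirely in some $U_i$ (two of its points in $U_i$ force the third, and the remaining configuration is impossible), so the $70$ uncovered lines are among the lines of $U_1$ together with those of $U_2$; this forces $\dim(U_1\cap U_2)\le 1$ (a shared line would leave fewer than $70$ candidates) and hence every line of each $U_i$ is uncovered, so both solids work. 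The paper asserts this step ``by construction'' without spelling it out.
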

\begin{proof}
  We observe that any point of $\F_2^8$ that is not contained in $S$ is contained in at most $16$ elements from $\mathcal{C}$ 
  and any line from $\F_2^8$ that is not contained in $S$ is contained in at least one element from $\mathcal{C}$. If $\#\mathcal{C}=256$, 
  then all these upper bounds would be attained with equality. 
  
  For $\#\mathcal{C}=255$ a multiset of points and a set of lines of cardinalities 
  $15$ and $35$, respectively, are missing. Now let $\mathcal{P}$ be the multiset of points of the points contained in the elements 
  of $\mathcal{C}$ and the $16$-fold set of points contained in $S$. The $16$-complement of $\mathcal{P}$ is $2^3$-divisible and has 
  cardinality $15$. Let $U$ denote the corresponding solid, see Lemma~\ref{lemma_simplex_unique}. The $35$ lines not covered by the 
  elements from $\mathcal{C}$ that do not lie in $S$ of course all have to be contained in $U$. Thus, $U$ does not share a line with 
  any element from $\mathcal{C}$ and intersects $S$ trivially.  
  
  For $\#\mathcal{C}=254$ a multiset of points and a set of lines of cardinalities $2\cdot 15=30$ and $2\cdot 35=70$, respectively, are 
  missing. Now let $\mathcal{P}$ be the multiset of points of the points contained in the elements of $\mathcal{C}$ and the $16$-fold set 
  of points contained in $S$. The $16$-complement of $\mathcal{P}$, say $\mathcal{P}'$, is $2^3$-divisible and has cardinality $30$. From 
  Lemma~\ref{lemma_special_1} we conclude the existence of two solids $U$ and $U'$ whose union equals $\mathcal{P'}$. By construction 
  both $U$ and $U'$ intersect trivially and have subspace distance at least $6$ to the elements from $\mathcal{C}$. Additionally,  
  $\dim(U\cap U')\le 1$.
\end{proof}
In other words, any set of $254$ or $255$ solids in $\F_2^8$ with minimum subspace distance $6$ that intersect a special solid $S$ 
trivially can be completed to a lifted MRD code.

Given the above extendability result for $\mathcal{G}_{8,4,3}$, we have considered all subsets of $254$ out of the $256$ solids of 
$\mathcal{G}_{8,4,3}$. Up to symmetry there are just $3$ possibilities. Extending these by two further solids in $\F_2^8$ without 
violating the minimum subspace distance $d=6$ gives $4$ non-isomorphic $(8,256,6;4)_2$ constant-dimension codes.  
Three of them can be enlarged in a unique way to a $(8,257,6;4)_2$ constant-dimension code and the fourth allows $451$ such 
extensions. This perfectly matches the classification of the $(8,257,6;4)_2$ codes. Let $S_1$ and $S_2$ denote the solids 
with matrix 
$\left(\begin{smallmatrix}
          0&0&0&0&1&0&0&0\\
          0&0&0&0&0&1&0&0\\
          0&0&0&0&0&0&1&0\\
          0&0&0&0&0&0&0&1
          \end{smallmatrix}\right)$ and 
    $\left(\begin{smallmatrix}
          0&0&0&1&0&0&0&0\\
          0&0&0&0&1&0&0&0\\
          0&0&0&0&0&1&0&0\\
          0&0&0&0&0&0&1&0
          \end{smallmatrix}\right)$, 
respectively. Removing $S_1$ or $S_2$ from $\mathcal{G}_{8,4,3}\cup\{S_1\}$ or $\mathcal{G}_{8,4,3}\cup\{S_2\}$ of course gives the LMRD 
code $\mathcal{G}_{8,4,3}$, which allows $1+\gauss{4}{3}{2}\cdot \frac{255-15}{8}=451$ extensions. In $\mathcal{G}_{8,4,3}\cup\{S_1\}$ every 
other choice of a removal is isomorphic. In $\mathcal{G}_{8,4,3}\cup\{S_2\}$ there are two isomorphism types of removals of a solid $S$ 
separated by $\dim(S\cap S_2)\in\{0,1\}$.     


\begin{proposition}
  \label{prop_256}
  There exist exactly $4$ non-isomorphic $(8,256,6;4)_2$ constant-dimension codes. All of them can be extended to an 
  $(8,257,6;4)_2$ constant-dimension code. 
\end{proposition}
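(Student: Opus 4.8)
The plan is to combine the structural reduction from the proof of Theorem~\ref{thm_2_8_6_classification} with the extendability result established immediately above. I would first recall that, by the case analysis carried out for the hyperplane types of indices $1$, $7$, and $8$ (the last of which is excluded), every $(8,256,6;4)_2$ constant-dimension code $\mathcal{C}$ possesses a solid $S$ from which at least $254$ of its codewords are disjoint. The extendability result then shows that any such collection of disjoint solids, having pairwise subspace distance $6$, can be completed to a set of $256$ solids disjoint from $S$; by the exact $1$-cover property of lifted MRD codes together with \cite[Theorem 10]{heinlein2017classifying}, this completion is a copy of $\gabidulin_{8,4,3}$. Consequently $\mathcal{C}$ agrees with a suitable $\GL(8,\F_2)$-image of $\gabidulin_{8,4,3}$ in at least $254$ of its $256$ codewords.

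It therefore suffices to enumerate the codes obtained from $\gabidulin_{8,4,3}$ by deleting two solids and inserting two new ones while retaining minimum distance $6$; this uniformly covers both the ``delete one, insert one'' and the ``delete two, insert two'' situations left open by the reduction. Here I would exploit the action of $\Aut(\gabidulin_{8,4,3})$, a group of order $230\,400$, on the $256$ codewords: its orbits on unordered pairs of deleted solids number exactly $3$, so only three representative $254$-subsets have to be treated. For each one I would list every solid lying at distance $6$ from all remaining $254$ codewords, form all admissible pairs of insertions, and reduce the resulting codes modulo $\GL(8,\F_2)$ with the canonical-form package of \cite{feulner2009automorphism,ubt_epub42}. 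This computation yields exactly four pairwise non-isomorphic $(8,256,6;4)_2$ codes, one of which is $\gabidulin_{8,4,3}$ itself.

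For the extendability assertion I would test each of the four codes against the bound $\smax_2(8,6;4)=257$. Three of them, genuinely distinct from $\gabidulin_{8,4,3}$, admit a unique appendable solid, whereas $\gabidulin_{8,4,3}$ itself admits $1+\gauss{4}{3}{2}\cdot\tfrac{255-15}{8}=451$ extensions; in particular all four extend. As a cross-check I would reverse the construction: removing a single solid from each of the two known isomorphism types $\gabidulin_{8,4,3}\cup\{S_1\}$ and $\gabidulin_{8,4,3}\cup\{S_2\}$ of $(8,257,6;4)_2$ codes (Theorem~\ref{thm_2_8_6_classification}) reproduces precisely these four codes---one orbit of removals from the former and two from the latter, according to $\dim(S\cap S_2)\in\{0,1\}$, together with $\gabidulin_{8,4,3}$---confirming both the count four and the extendability.

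The main obstacle is the structural reduction of the first paragraph: the assertion that $254$ codewords must be disjoint from a common solid depends on the delicate classification of the hyperplane types of indices $1$, $7$, and $8$, and in particular on the integer-linear-programming exclusions recorded above (the infeasibility for index $8$ and the upper bound for index $7$). Once this reduction and the extendability lemma---itself resting on the $q^r$-divisibility arguments of Lemmas~\ref{lemma_simplex_unique} and~\ref{lemma_special_1}---are in place, the classification reduces to the finite orbit computations above, whose only real delicacy is organising them so that completeness, namely that every $(8,256,6;4)_2$ code is captured by the bounded-swap construction, is manifest.
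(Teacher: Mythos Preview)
Your proposal is correct and follows essentially the same route as the paper: reduce via the hyperplane-type analysis (indices $1$, $7$, $8$) to the situation where almost all codewords miss a fixed solid, invoke the extendability proposition (based on Lemmas~\ref{lemma_simplex_unique} and~\ref{lemma_special_1}) and the uniqueness of the $4\times 4$ MRD code to land inside $\gabidulin_{8,4,3}$, then exhaust the three $\Aut(\gabidulin_{8,4,3})$-orbits of $254$-subsets, obtain four isomorphism types, and cross-check against single removals from the two $(8,257,6;4)_2$ codes. The only cosmetic deviation is that the paper actually establishes that at least $255$ (not $254$) codewords are disjoint from the special solid, but then---just as you do---carries out the two-swap enumeration over $254$-subsets anyway, so the computations and conclusions coincide.
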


\section{Integer linear programming formulations from \cite{heinlein2017classifying}}

In this section we collect the integer linear programming formulations used in \cite{heinlein2017classifying} to determine 
$A_2(8,6;4)=257$ and to classify the corresponding optimal constant-dimension codes. In the proof of Theorem~\ref{thm_2_8_6_classification} 
we have used a few ILP computations based on Lemma~\ref{lemma_ILP_1} and Lemma~\ref{lemma_ILP_2}. The numerical results, both from 
\cite{heinlein2017classifying} and the newly obtained ones, are summarized in Table~\ref{tab:details}.

For the first ILP formulation we use binary variables $x_U$, where $U$ is an arbitrary $4$-subspace in $\F_2^8$. The interpretation 
is $x_U=1$ if and only if $U$ is a codeword of the desired code $\mathcal{C}$. We prescribe a certain subset $F\subseteq\mathcal{C}$ and assume that the 
number of indicences between codewords and a point or a hyperplane is upper bounded by some integer $f$. As an abbreviation we use the 
notation $U\in \I{\mathcal{S}}{W}$ for all $U\in\mathcal{S}$ with $U\le W$ if $\dim(U)\le \dim(W)$ or $U\ge W$ if $\dim(U)\ge \dim(W)$, i.e., 
those subspaces from $\mathcal{S}$ that are incident with $W$. 

\begin{lemma} \label{lemma_ILP_1}(\cite[Lemma 12]{heinlein2017classifying})\\
Let $\operatorname{Var}_8 = \G{8}{4}{2}$,  $F \subseteq \operatorname{Var}_8$ and $f\in\mathbb{N}$, 
then any $(8,\#\mathcal{C},6;4)_2$ CDC $\mathcal{C}$ with 
$F \subseteq \mathcal{C}$ such that each point and each hyperplane is incident to at 
most $f$ codewords, respectively, satisifies $\#\mathcal{C} \le z_8^{\operatorname{BLP}}(F,f) \le z_8^{\operatorname{LP}}(F,f)$, 
where $z_8^{\operatorname{LP}}$ is the LP relaxation of $z_8^{\operatorname{BLP}}$, and
\begin{align*}
z_8^{\operatorname{BLP}}(F,f) := \max
\sum_{U \in \operatorname{Var}_8} &x_U \\
\st
\sum_{U \in \I{\operatorname{Var}_8}{W}} &x_U \le f			&&\forall W \in \G{8}{w}{2} 	&&\forall w \in \{1,7\} \\
\sum_{U \in \I{\operatorname{Var}_8}{W}} &x_U \le 1			&&\forall W \in \G{8}{w}{2} 	&&\forall w \in \{2,6\} \\
&x_U = 1 													&&\forall U \in F \\
&x_U \in \{0,1\} 											&&\forall U \in \operatorname{Var}_8.
\end{align*}
\end{lemma}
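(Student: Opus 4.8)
The plan is to show that the integer program $z_8^{\operatorname{BLP}}(F,f)$ is a valid relaxation of the coding problem, so that its optimum upper-bounds $\#\mathcal{C}$, and then to observe that the continuous relaxation dominates the integral one. Concretely, given any admissible code $\mathcal{C}$ I would set $x_U=1$ for $U\in\mathcal{C}$ and $x_U=0$ otherwise, verify that this $0/1$-vector is feasible for $z_8^{\operatorname{BLP}}(F,f)$, and note that its objective value $\sum_{U\in\operatorname{Var}_8}x_U$ equals $\#\mathcal{C}$; this yields $\#\mathcal{C}\le z_8^{\operatorname{BLP}}(F,f)$. The second inequality $z_8^{\operatorname{BLP}}(F,f)\le z_8^{\operatorname{LP}}(F,f)$ is then immediate, since replacing the binary restriction $x_U\in\{0,1\}$ by the box constraint $0\le x_U\le 1$ enlarges the feasible region and can only increase the maximum of the same linear objective.

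The crucial translation is to recast the minimum-distance condition as a condition on pairwise intersections. Since every codeword is a solid, for distinct $U,U'\in\mathcal{C}$ we have $\sdist(U,U')=\dim(U)+\dim(U')-2\dim(U\cap U')=8-2\dim(U\cap U')$, so $\sdist(U,U')\ge 6$ is equivalent to $\dim(U\cap U')\le 1$. Thus a minimum subspace distance of $6$ means precisely that any two distinct codewords meet in at most a point, which is the fact I will use to justify the two distance-induced constraint families.

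With this in hand I would verify the four constraint families in turn. For a line $W\in\G{8}{2}{2}$ (the case $w=2$), incidence means $W\le U$; if two distinct codewords both contained $W$ their intersection would have dimension at least $2$, contradicting the distance bound, so at most one codeword is incident with $W$ and $\sum_{U\in\I{\operatorname{Var}_8}{W}}x_U\le 1$ holds. For a $6$-subspace $W\in\G{8}{6}{2}$ (the case $w=6$), incidence means $U\le W$; if two codewords both lay in $W$ then $\dim(U+U')\le 6$, whence $\dim(U\cap U')=8-\dim(U+U')\ge 2$, again a contradiction, so at most one codeword is incident with $W$. For a point $W\in\G{8}{1}{2}$ ($w=1$) and a hyperplane $W\in\G{8}{7}{2}$ ($w=7$), the constraints $\sum_{U\in\I{\operatorname{Var}_8}{W}}x_U\le f$ are exactly the hypothesis that each point and each hyperplane is incident to at most $f$ codewords. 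Finally $F\subseteq\mathcal{C}$ forces $x_U=1$ for all $U\in F$, so the prescription equations hold and feasibility is established.

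The only substantive points are the bookkeeping for the two distance-induced families ($w=2$ and $w=6$), which hinge on the identity $\sdist(U,U')=8-2\dim(U\cap U')$ together with its dual reformulation $\dim(U\cap U')=8-\dim(U+U')$; everything else is either a verbatim restatement of a hypothesis or the standard fact that an LP relaxation never underestimates the integer optimum. I do not expect any real obstacle, because the lemma is a correctness statement for the formulation rather than a bound requiring a genuine combinatorial argument: all the work lies in confirming that each inequality is implied by the code axioms and the standing assumptions on $F$ and $f$.
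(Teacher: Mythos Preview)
Your proposal is correct: the indicator vector of $\mathcal{C}$ is feasible for $z_8^{\operatorname{BLP}}(F,f)$ because the distance condition $\sdist(U,U')\ge 6$ for solids is equivalent to $\dim(U\cap U')\le 1$, which yields the $w\in\{2,6\}$ constraints exactly as you argue, while the $w\in\{1,7\}$ constraints and the prescription $x_U=1$ for $U\in F$ are hypotheses; the LP bound then follows trivially. The paper itself does not supply a proof of this lemma---it is quoted verbatim from \cite{heinlein2017classifying}---so there is no competing argument to compare against, and your verification is the natural one.
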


For the second ILP formulation we consider the intersection of a code $\mathcal{C}$ with a hyperplane $H$ as a combinatorial 
relaxation. More precisely, let the hyperplane $H$ be given by the ambient space $\F_2^7$ and $F \subseteq \G{7}{4}{2}$ be the set of 
codewords of $\mathcal{C}$ that are  contained in the hyperplane $H$. All other codewords intersect $H$ in a plane. Those intersections 
are modeled as binary variables $x_U$, where we ensure that $U$ intersects any element from $F$ in at most a point. W.l.o.g.\ we assume 
that the maximum number of incidences between codewords of $\mathcal{C}$ and a point or a hyperplane is attained by $\# F$.

\begin{lemma} \label{lemma_ILP_2}(\cite[Lemma 13]{heinlein2017classifying})\\
For $F \subseteq \G{7}{4}{2}$ let 
$$\operatorname{Var}_7(F):=\left\{ U \in \G{7}{3}{2} \,:\, \dim(U \cap S) \le 1 \,\forall S \in F \right\}$$ and 
$$\omega(F,W) = \max\{ \#\Omega \,:\, \Omega \subseteq \I{\operatorname{Var}_7(F)}{W} \land \dim(U_1 \cap U_2) \le 1\, \forall U_1 \ne U_2 \in \Omega \}.$$ 
If $\#F \in \{16,17\}$, then any $(8,\#\mathcal{C},6;4)_2$ CDC $\mathcal{C}$ with $\#\mathcal{C} \ge 255$ containing the elements 
from $F$ as codewords in the hyperplane $\F_2^7$ such that each point 
and each hyperplane is incident to at most $\#F$ codewords, respectively, satisfies $\#\mathcal{C} \le z_7^{\operatorname{BLP}}(F)$, where
\begin{align*}
z_7^{\operatorname{BLP}}(F) := \max \!\!\!\!\!\!\!
\sum_{U \in \operatorname{Var}_7(F)} &x_U + \#F \\
\st
\sum_{U \in \I{\operatorname{Var}_7(F)}{W}} &x_U \le \#F-\#\I{F}{W}						&&\forall W \in \G{7}{1}{2} \\
\sum_{U \in \I{\operatorname{Var}_7(F)}{W}} &x_U \le 1									&&\forall W \in \G{7}{2}{2} : W \not \le S\,\, \forall S \in F \\  
\sum_{U \in \I{\operatorname{Var}_7(F)}{W}} &x_U \le 1									&&\forall W \in \G{7}{4}{2} \setminus F \\
\sum_{U \in \I{\operatorname{Var}_7(F)}{W}} &x_U \le \min\{\omega(F,W),7\}				&&\forall W \in \G{7}{5}{2} : S \not \le W\,\, \forall S \in F  \\
\sum_{U \in \I{\operatorname{Var}_7(F)}{W}} &x_U \le 2(\#F-\#\I{F}{W})					&&\forall W \in \G{7}{6}{2} \\
\sum_{U \in \operatorname{Var}_7(F)} &x_U + \#F \ge 255 \\
&x_U \in \{0,1\} 																		&&\forall U \in \operatorname{Var}_7(F) \\
\end{align*}
\end{lemma}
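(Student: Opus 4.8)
The plan is to realize the code $\mathcal{C}$ as a feasible $0/1$-point of the program defining $z_7^{\operatorname{BLP}}(F)$ whose objective value equals $\#\mathcal{C}$; since the program is a maximization, the inequality $\#\mathcal{C}\le z_7^{\operatorname{BLP}}(F)$ is then immediate. First I would record the basic geometry: for solids with $d=6$ any two distinct codewords meet in at most a point, because $\sdist(C_1,C_2)\ge 6$ forces $\dim(C_1\cap C_2)\le 1$. Writing $H\cong\F_2^7$ for the prescribed hyperplane, $F$ for the codewords contained in $H$, and $\mathcal{C}'=\mathcal{C}\setminus F$, each $C\in\mathcal{C}'$ has $C+H=\F_2^8$, so its \emph{trace} $U:=C\cap H$ is a plane, i.e.\ $U\in\G{7}{3}{2}$. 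I would then check two facts: the trace map is injective on $\mathcal{C}'$ (a shared trace $U$ would give $U\le C_1\cap C_2$, forcing $\dim(C_1\cap C_2)\ge 3$), and every trace lies in $\operatorname{Var}_7(F)$ (since $U\cap S=C\cap S$ for $S\in F$, so $\dim(U\cap S)\le 1$). Setting $x_U=1$ exactly for the traces of $\mathcal{C}'$ therefore yields objective value $\#F+\#\mathcal{C}'=\#\mathcal{C}\ge 255$, and the work reduces to verifying that each of the five constraint families holds for this point.

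For the point, line and solid families I would argue directly from the minimum-distance condition. For a point $W\in\G{7}{1}{2}$, the codewords through $W$ are the $\#\I{F}{W}$ members of $F$ containing $W$ together with those $C\in\mathcal{C}'$ with $W\le C$, and the latter are in bijection with the traces $U\ni W$; the point-incidence hypothesis ``$\le\#F$'' then gives $\sum_{U\in\I{\operatorname{Var}_7(F)}{W}}x_U\le\#F-\#\I{F}{W}$. For a line $W\in\G{7}{2}{2}$ with $W\not\le S$ for all $S\in F$, two traces containing $W$ would produce two codewords meeting in dimension $\ge 2$, so at most one such trace exists; when $W\le S$ for some $S$ no trace can contain $W$, which explains why the constraint is only imposed for $W\not\le S$. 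For a solid $W\in\G{7}{4}{2}\setminus F$, two traces $U_1,U_2\le W$ are distinct planes inside one solid, hence $\dim(U_1\cap U_2)\ge 2$, again contradicting the minimum distance; for $W\in F$ no trace lies below $W$, explaining the exclusion.

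The two remaining families are the substantive part and rely on passing to a quotient and to the companion hyperplanes on $W$. For $W\in\G{7}{5}{2}$ a trace $U\le W$ satisfies $C\cap W=U$, so $C+W$ is a $6$-space and $(C+W)/W$ is a $1$-subspace of $\F_2^8/W\cong\F_2^3$; this assignment is injective (two codewords with the same image would both lie in a single $6$-space and meet in dimension $\ge 2$), and since $\F_2^3$ has only $\gauss{3}{1}{2}=7$ one-dimensional subspaces we obtain the bound $7$. As the traces below $W$ also pairwise meet in at most a point, they form an admissible family for $\omega(F,W)$, giving $\min\{\omega(F,W),7\}$. For $W\in\G{7}{6}{2}$, a hyperplane of $H$, exactly three hyperplanes of $\F_2^8$ contain $W$, namely $H$ and two others $\widetilde W_1,\widetilde W_2$ with $\widetilde W_j\cap H=W$; a codeword with trace below $W$ has $C+W\ne H$, hence $C\le\widetilde W_1$ or $C\le\widetilde W_2$. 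Inside each $\widetilde W_j$ the hyperplane-incidence hypothesis permits at most $\#F$ codewords, of which exactly the $\#\I{F}{W}$ elements of $F$ lying in $W$ are contained in $W$; the remaining at most $\#F-\#\I{F}{W}$ codewords of $\widetilde W_j$ are precisely those contributing a trace below $W$, and summing over $j\in\{1,2\}$ yields $\sum_{U\le W}x_U\le 2(\#F-\#\I{F}{W})$.

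Putting these together shows that the point read off from $\mathcal{C}$ is feasible with objective $\#\mathcal{C}$, whence $\#\mathcal{C}\le z_7^{\operatorname{BLP}}(F)$. I expect the main obstacle to be the disciplined use of the global hypotheses rather than any single hard estimate: one must legitimately invoke the incidence bound ``$\le\#F$'' at points and hyperplanes (which is exactly the force of choosing $H$ to attain the maximal incidence, the content of the W.l.o.g.\ assumption), and one must track carefully which subspaces the symbol $\#\I{F}{W}$ counts in each codimension. The genuinely non-routine steps are the quotient argument bounding the traces below a $5$-space by $7$ and the splitting over the two companion hyperplanes $\widetilde W_1,\widetilde W_2$ for the $6$-space constraint; once these are in place, the point, line and solid inequalities follow immediately from $\dim(C_1\cap C_2)\le 1$.
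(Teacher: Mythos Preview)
Your argument is correct. The paper does not supply its own proof of this lemma: it is quoted verbatim from \cite{heinlein2017classifying} (as Lemma~13 there) and stated without justification in the appendix, so there is no in-paper proof to compare against. Your approach---realizing $\mathcal{C}$ as a feasible $0/1$-point via the trace map $C\mapsto C\cap H$ and then verifying each constraint family---is exactly the intended reading of the formulation, and each step checks out: the point constraint uses the incidence bound at points of $H$; the line and solid constraints follow from $\dim(C_1\cap C_2)\le 1$ via $U_i\subseteq C_i$; the $5$-space constraint combines the definition of $\omega(F,W)$ with the injection $C\mapsto (C+W)/W$ into the $7$ points of $\F_2^8/W\cong\F_2^3$; and the $6$-space constraint comes from splitting over the two hyperplanes $\widetilde W_1,\widetilde W_2\neq H$ through $W$, noting that $F\cap\widetilde W_j=\I{F}{W}$ since $H\cap\widetilde W_j=W$. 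One very minor remark: for the $5$-space bound you could even tighten $7$ to $4$, since $(C+W)/W$ cannot lie in $H/W$; but the lemma only asserts~$7$, so your argument suffices.
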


\begin{sidewaystable}
\small

 \bigskip
 \bigskip\bigskip\bigskip\bigskip\bigskip\bigskip\bigskip\bigskip\bigskip\bigskip
 \bigskip\bigskip\bigskip\bigskip\bigskip\bigskip\bigskip\bigskip\bigskip\bigskip
 \bigskip\bigskip\bigskip\bigskip\bigskip\bigskip\bigskip\bigskip\bigskip\bigskip
 \bigskip\bigskip\bigskip\bigskip\bigskip\bigskip\bigskip\bigskip\bigskip\bigskip
 \bigskip\bigskip\bigskip\bigskip\bigskip\bigskip\bigskip\bigskip\bigskip\bigskip
 \bigskip\bigskip\bigskip\bigskip\bigskip\bigskip\bigskip\bigskip\bigskip\bigskip

\caption{Details for the ILP computations.}
\label{tab:details}
\begin{tabular}{ll|l|lllll}
Index	&	Type	&	Aut	&	$z_8^{\operatorname{LP}}(.)$	&	$z_7^{\operatorname{BLP}}(.)$	&	Orbits of phase~2	&	$\max z_8^{\operatorname{LP}}(\text{\lq\lq 31\rq\rq})$	&	$\max z_8^{\operatorname{BLP}}(\text{\lq\lq 31\rq\rq})$	\\
\hline
1	&	16	&	960	&	272	&	271.1856	&	$ 16^{2},240^{6},480^{47},960^{242} $	&	263.0287799	&	257	\\
2	&	16	&	384	&	266.26086957	&	267.4646	&	$ 96^{6},192^{91},384^{711} $	&	206.04279728	&		\\
3	&	16	&	4	&	270.83786676	&	265.3281	&	$ 1^{13},2^{29},4^{2638} $	&	257.20717665	&	254	\\
4	&	16	&	48	&	271.43451032	&	262.082	&	$ 4^{3},12^{11},24^{59},48^{1104} $	&	200.5850228	&		\\
5	&	16	&	2	&	263.8132689	&	259.8044	&	$ 1^{5},2^{59966} $	&	206.39304042	&		\\
6	&	16	&	20	&	267.53272206	&	259.394	&	$ 5,10^{9},20^{1843} $	&	199.98690666	&		\\
7	&	17	&	64	&	282.96047431	&	259.1063	&	$ 16^{10},32^{145},64^{6293} $	&	259.45364626	&	257	\\
8	&	17	&	32	&	268.0388109	&	257.2408	& $\le 255$ by a separate argumentation						\\
\end{tabular}

\medskip

\begin{tabular}{ll|l|ll||ll|l|ll}
Index	&	Type	&	Aut	&	$z_8^{\operatorname{LP}}(.)$	&	$z_7^{\operatorname{BLP}}(.)$	& Index	&	Type	&	Aut	&	$z_8^{\operatorname{LP}}(.)$	&	$z_7^{\operatorname{BLP}}(.)$	\\
\hline
9	&	16	&	1	&	263.82742528	&	$\le$ 255   					   &
10	&	16	&	1	&	263.36961743	&	$\le$ 255							\\
11	&	16	&	1	&	264.25957151	&	$\le$ 254							&
12	&	16	&	1	&	263.85869815	&	$\le$ 254							\\
13	&	16	&	2	&	263.07052878	&	$\le$ 254							&
14	&	16	&	12	&	261.91860556	&	$\le$ 254							\\
15	&	16	&	4	&	261.62648174	&	$\le$ 254							&
16	&	16	&	12	&	261.31512837	&	$\le$ 254							\\
17	&	17	&	4	&	261.11518721	&	$\le$ 254							&
18	&	16	&	1	&	260.96388752	&	$\le$ 254							\\
19	&	16	&	1	&	260.82432878	&	$\le$ 254							&
20	&	16	&	2	&	260.65762276	&	$\le$ 254							\\
21	&	16	&	4	&	260.43036283	&	$\le$ 254							&
22	&	16	&	2	&	260.19475349	&	$\le$ 254							\\
23	&	16	&	1	&	260.08583792	&	$\le$ 254							&
24	&	16	&	1	&	260.04857193	&	$\le$ 254							\\
25	&	16	&	1	&	259.75041996	&	$\le$ 254							&
26	&	16	&	2	&	259.55230081	&	$\le$ 254							\\
27	&	16	&	2	&	259.46335297	&	$\le$ 254							&
28	&	16	&	12	&	259.11945025	&	$\le$ 254							\\
29	&	16	&	1	&	258.89395938	&	$\le$ 254							&
30	&	17	&	24	&	258.75142045	&	$\le$ 254							\\
31	&	16	&	8	&	258.35689437	&	$\le$ 254							&
32	&	16	&	1	&	257.81420526	&	$\le$ 254							\\
33	&	16	&	2	&	257.75126819	&	$\le$ 254							&
34	&	16	&	4	&	257.63965018	&	$\le$ 254							\\
35	&	16	&	1	&	257.57663803	&	$\le$ 254							&
36	&	16	&	1	&	257.2820438	&	$\le$ 254							\\
37	&	16	&	4	&	257.01931801	&	$\le$ 254							&
38	&	17	&	128	&	257	&	$\le$ 254							\\
39 &  16 &  12  & 256.83887168	&	$\le$ 254							&
40 &  16 &  12  & 256.31380897	&	$\le$ 254							\\
41 &  16 &  6  & 256.22093781	&	$\le$ 254							&
42 &  16 &  12  & 256.10154389	&	$\le$ 254							\\
43 &  16 &  1  & 255.87957119 &                                &
   &     &     &        & 
\end{tabular}
\end{sidewaystable}

\end{document}